\documentclass[10pt]{article}

\usepackage{amsmath}
\usepackage{amssymb}
\usepackage{amsthm}

\textheight = 22cm
\textwidth = 119.2mm
\voffset = -10mm
\newcommand{\R}{\mathbb{R}}
\newcommand{\C}{\mathbb{C}}
\newcommand{\Z}{\mathbb{Z}}
\newcommand{\N}{\mathbb{N}}
\newcommand{\cc}[1]{\overline{#1}}
\newcommand{\bs}{\boldsymbol}

\newcommand{\Span}{\mathop{\mathrm{span}}}
\newcommand{\te}{\tilde}
\newcommand{\ind}[2]{\mathstrut^{#1}\!{#2}}
\newcommand{\cl}[1]{{\cal #1}}
\newcommand{\set}[2]{\left\{#1:#2\right\}}
\newcommand{\cA}{{\cal A}}
\newcommand{\cS}{{\cal S}}
\newcommand{\cD}{{\cal D}}
\newcommand{\cV}{{\cal V}}
\newcommand{\cP}{{\cal P}}
\newcommand{\bD}{{\bs D}}
\newcommand{\bP}{{\bs P}}

\newcommand{\lxl}{\mathrel{<_{\mathrm{lex}}}}
\newcommand{\ssc}[2]{\mathstrut^{#1}\!#2}

\newtheorem{theorem}{Theorem}[section]
\newtheorem{lemma}[theorem]{Lemma}
\newtheorem{corollary}[theorem]{Corollary}
\newtheorem{statement}[theorem]{Statement}

\newtheorem{remark}{Remark}[section]
\newtheorem{definition}{Definition}[section]

\numberwithin{equation}{section}

\begin{document}

\title{Polynomial spaces reproduced by elliptic scaling functions\thanks{Submitted to Constr.\ Approx.}}

\author{Victor G.~Zakharov\\[0.5ex]
\small Institute of Continuum Mechanics of Russian Academy of Sciences,\\
\small              Perm, 614013, Russia\\
\small  E-mail: \tt    victor@icmm.ru
}

\maketitle{}

\paragraph{Abstract}
The Strang--Fix conditions are necessary and sufficient to reproduce spaces of algebraic
polynomials up to some degree by integer shifts of compactly supported functions.
W.~Dahmen and Ch.~Micchelli (Linear Algebra Appl.\ 52/3:217--234,\,1983) introduced a
generalization of the Strang--Fix conditions to affinely invariant
subspaces of higher degree polynomials.
C.~de~Boor (Constr.\ Approx.\
3:199–-208,\,1987)
raised a question on the necessity of scale-invariance of polynomial space for an arbitrary function; and he
omitted the scale-invariance restriction on the space.
In the paper, we present a matrix approach to determine (not necessarily scale-invariant)
polynomial space contained
in the span of integer shifts of a compactly supported function.
Also, in the paper, we consider scaling functions that we call
the elliptic scaling functions (Int.\ J.\ Wavelets Multiresolut.\
Inf.\ Process.\,To appear); and, using the matrix approach, we
prove that the elliptic scaling functions satisfy the generalized
Strang--Fix conditions and reproduce only affinely invariant
polynomial spaces. Namely we prove that any algebraic polynomial
contained in the span of integer shifts of a compactly supported
elliptic scaling function belongs to the null-space
of a homogeneous elliptic differential operator.
However, in the paper, we present
nonstationary elliptic scaling functions such that
the scaling functions
reproduce not scale-invariant (only shift-invariant) polynomial spaces.

\medskip

\noindent{\it Keywords:} Elliptic scaling functions, Strang--Fix
conditions, Affinely invariant polynomial
spaces,
Isotropic dilation matrices,
Polynomial solutions of elliptic differential equations\\[1ex]
\noindent{\it 2010 MSC:} Primary: 
41A30, 41A63; Secondary: 15A23, 
35C11


\section{Introduction}

In the paper~\cite{DahmenMicchelli}, W.\;Dahmen and Ch.\;Micchelli introduced a space of polynomials
\begin{equation}\label{SDefinition}
  {\cV}:=\set{P}{P\in\Pi,\,\left(P(D)\hat f\right)(2\pi k)=0,\ k\in\Z^d\setminus\{0\}},
\end{equation}
where $\Pi$ is the space of {\em all} polynomials on $\R^d$,
$P(D)$ is the differential operator induced by $P$, and $f$ is a
compactly supported function (that, for example, belongs to the
space of tempered distributions). And, see Proposition~2.1 in the
paper~\cite{DahmenMicchelli}, it was proved that if there
exists an affinely, i.\,e., shift- and scale-, invariant subspace
$\cV_{\rm aff}\subseteq\cV$, where $\cV$ is given
by~\eqref{SDefinition}, and $\hat f(0)\ne0$; then the span of
integer shifts of $f$ contains the space $\cV_{\rm aff}$.

The conditions on a function $f$:
\begin{equation}\label{Strang-FixConditions}
  \left(P(D)\hat f\right)(2\pi k)=0,\ k\in\Z^d\setminus\{0\},\ \forall P\in\cV\subset\Pi
\end{equation}
can be considered as a generalization of the Strang--Fix conditions~\cite{FixStr,StrFix}.
Unlike the classical Strang--Fix conditions, the derivatives of the Fourier
transform of the function $f$ do not necessarily vanish to satisfy~\eqref{Strang-FixConditions}.
The well-known functions that satisfy conditions~\eqref{Strang-FixConditions} in the nontrivial case,
i.\,e., if not all the derivatives up to some order vanish,
are the box-splines.

In the paper~\cite{R.-Q.Jia}, R.-Q.~Jia proved that if a multivariate scaling function that
satisfies a refinement relation with an {\em isotropic} dilation matrix belongs
to the Sobolev space $W_1^{k}(\R^d)$;
then the scaling function satisfies the (classical) Strang--Fix conditions
of order $k$, i.\,e,
the scaling function reproduces {\em all} the polynomials up to degree $k$.
In the present paper, we consider
the so-called {\em elliptic scaling functions}~\cite{ZakhConstrApprox}.
The elliptic scaling functions satisfy refinement relations with real isotropic dilation matrices.
In the paper, we prove that any
real isotropic matrix is similar to an orthogonal matrix and the similarity transformation matrix defines a
positive definite quadratic form.
The quadratic form determines homogeneous elliptic differential operators
and the form (and operators) is invariant under
coordinate transformation by the dilation matrix.
The elliptic scaling functions
also satisfy nontrivial conditions~\eqref{Strang-FixConditions}.
In fact, the algebraic polynomials reproduced by compactly supported elliptic scaling functions
belong to the null-spaces
of homogeneous elliptic differential operators.
Note that many essential properties of the elliptic scaling functions are
similar to the properties of the (univariate and multivariate) B-splines.
We refer the reader to~\cite{ZakhConstrApprox} for details.

The affine invariance of the polynomial spaces
seems rather strong restriction and,
in the paper~\cite[Proposition~2.2]{deBoor} of
C.~de~Boor, a generalization to not scale-invariant (only shift-invariant) polynomial
spaces was considered.
Namely it was proved that if a compactly supported
function $f$ satisfies conditions~\eqref{Strang-FixConditions} (and $\hat f(0)\ne0$);
then
the span of integer shifts of the function $f$ contains the {\em largest shift-invariant subspace} of the space $\cV$.
However, in the paper~\cite{deBoorHollig},
it was shown that the box-splines reproduce only affinely invariant polynomial spaces.
In the case of the elliptic scaling functions, we also have affinely invariant spaces only.
Nevertheless a nonstationary generalization,
i.\,e., if the scaling functions (and the corresponding masks) do not coincide
for different scales,
of the elliptic scaling functions allows satisfying
properties~\eqref{Strang-FixConditions} in the not scale-invariant case. And
we
construct nonstationary scaling functions that
reproduce not affinely invariant polynomial spaces.

Note that, generally, the determination of a shift-invariant subspace of space~\eqref{SDefinition} is
a nontrivial problem. In the paper, we present an approach to determine
the largest shift-invariant
subspace of the space $\cV$. In fact,
we introduce a matrix of (non-zero) derivatives of the function $\hat f$ at the points $2\pi\Z^d\setminus\{0\}$;
and the null-space of the matrix
defines completely the largest shift-invariant subspace of the space $\cV$.
(In the framework of this approach, the affinely invariant subspaces also can be obtained.)

The paper is organized as follows.
Basing on the papers of C.~de~ Boor~\cite{deBoor} and
W.~Dahmen \& Ch.~Micchelli~\cite{DahmenMicchelli},
Section~\ref{Sect:WeakenedStrang--FixConditions}
is devoted to
a generalization of the Strang--Fix conditions.
In particular, in the section, we introduce notations and definitions.
In Section~\ref{Sect:EllipticScalingFunctions}, we consider the so-called elliptic scaling functions.
The section is based on the
paper~\cite{ZakhConstrApprox} and contains a definition of the elliptic scaling functions
and presents some properties of the elliptic scaling functions.
In particular,
the positive definite quadratic forms that correspond
to the dilation matrices and
determine the homogeneous elliptic differential
operators are defined.
Section~\ref{Sect:ReproductionOfPolynomials}
is devoted to the polynomial spaces
reproduced by compactly supported elliptic scaling functions.
In Subsection~\ref{Subsect:NotAffinelyInvariantCase}, we consider a nonstationary generalization of the
elliptic scaling functions and prove that the polynomial spaces contained in the spans of integer
shifts of the nonstationary elliptic scaling functions can be not scale-invariant.

\subsection{Notations}

Here we introduce some general notation.

A {\em multi-index} $\alpha$ is a $d$-tuple
$(\alpha_1,\dots,\alpha_d)$ with its components being nonnegative
integers, i.\,e., $\alpha\in\Z^d_{\ge0}$.
The {\em length} of the multi-index $\alpha$ is $|\alpha|:=
\alpha_1+\cdots+\alpha_d$. For $\alpha=(\alpha_1,\dots,\alpha_d)$,
$\beta=(\beta_1,\dots,\beta_d)$, we write $\beta\le\alpha$ if
$\beta_j\le\alpha_j$ for all $j=1,\dots,d$. The factorial of
$\alpha$ is $\alpha! := \alpha_1!\cdots\alpha_d!$. The binomial
coefficient for multi-indices is
$$
   \dbinom{\alpha}{\beta}:=\dbinom{\alpha_1}{\beta_1}\cdots\dbinom{\alpha_d}{\beta_d}
   =\frac{\alpha!}{\beta!(\alpha-\beta)!};
$$
note that, by definition,
\begin{equation*}
  \dbinom{\alpha}{\beta}=0\qquad \mbox{if $\beta\nleq\alpha$}.
\end{equation*}

By $x^\alpha$, where $x=(x_1,\dots,x_d)\in\mathbb{R}^d$,
$\alpha=(\alpha_1,\dots,\alpha_d)\in\Z^d_{\ge0}$, denote the monomial $x_1^{\alpha_1}\cdots
x_d^{\alpha_d}$. Note that the
{\em total degree} of $x^\alpha$ is $|\alpha|$. The
multi-dimensional version of the {\em binomial formula} is
$$
  (x+y)^\alpha=\sum_{\begin{subarray}{l}
             \beta\in\Z^d_{\ge0}\\\beta\le\alpha
           \end{subarray}}
     \dbinom{\alpha}{\beta}x^\beta y^{\alpha-\beta},\quad
                  \alpha\in\Z_{\ge0}^d,\ x,y\in\R^d.
$$
By $P_L$ denote a polynomial of total degree $L$; and by $P_{\le L}$ denote
a polynomial the total degree of which is less than or equal to $L$.
Denote by $\Pi$ the space of {\em all} polynomials on $\R^d$.
Also denote by
$\Pi_{\le L}$, $L\in\Z_{\ge0}$, the space of polynomials with
total degree less than or equal to $L$:
$\Pi_{\le L}:=\Span\set{x^\alpha}{x\in\R^d, \alpha\in\Z^d_{\ge0}, |\alpha|\le L}$;
and by
$\Pi_L$ the space of polynomials whose
total degree is equal to $L$:
$\Pi_L:=\Span\set{x^\alpha}{x\in\R^d, \alpha\in\Z^d_{\ge0}, |\alpha|= L}$ (here and
in the sequel, the `$\Span$' means the linear span over $\R$).

By $R(x)$ denote a power series $R(x):=\sum_{k\in\Z^d_{\ge0}}a_kx^k$, $x\in\R^d$, $a_k\in\R$.
The {\it order} of the power series $R$ is the least value $|k|$ such that $a_k \ne 0$.
By $R_L(x)$, $L\ge0$, we denote a power series of order $L$ and by $R_{> L}(x)$ denote
a power series of order greater than $L$.

Let $D^\alpha$ stand for the differential operator
$D_1^{\alpha_1}\cdots D_d^{\alpha_d}$, where $D_n$, $n=1,\dots,d$,
is the partial derivative with respect to the $n$th coordinate.
Note that
$D^{(0,\dots,0)}$ is the identity operator.

By $x\cdot y$ denote the inner product of two vectors
$x,y\in\R^d$: $x\cdot y:=x_1y_1+\cdots+x_d y_d$. The Fourier
transform of a function $f\in L^1\left(\R^d\right   )$ is defined by
$$
  F(f)(\xi):=(2\pi)^{-d/2}\int_{\R^d} f(x)e^{-i\xi\cdot x}\,dx=:\hat f(\xi),\qquad \xi\in\R^d.
$$
Note that the Fourier transform can be extended to compactly
supported functions (distributions) from the space
$\mathcal{S}'\left(\R^d\right)$, where $\mathcal{S}'$ denotes the
{\em space of tempered distributions}. So we have the following
useful formula
\begin{equation*}
  F(x^\alpha)(\xi)=i^{|\alpha|} D^\alpha\delta(\xi),
          \qquad \xi\in\R^d,\ \alpha\in\Z^d_{\ge0},
\end{equation*}
where $\delta$ is the Dirac delta-distribution.



\section{Strang--Fix conditions}\label{Sect:WeakenedStrang--FixConditions}


Actually this section is
an auxiliary section
and, for more details, we refer the reader to the forthcoming paper.

\subsection{Definition of matrices}

\subsubsection{Notations}

First we recall some block matrix notions.
We say that a {\em block matrix} is a matrix
broken into
sections called {\em blocks} or {\em submatrices}.
A {\em block diagonal matrix} is a block matrix that is a square matrix such that
the main diagonal square submatrices can be nonzero
and the off-diagonal submatrices are zero matrices.
The (block)
diagonals can be specified by an index $k$
measured relative to the main diagonal, thus the main diagonal has $k=0$
and the $k$-diagonal consists of the entries on the $k$th diagonal above the main diagonal.
Note that all the $k$-diagonal submatrices, except the submatrices on the main diagonal, can be not square matrices.

By symbol `$\lxl$' we denote the {\em lexicographical order} and
by ${\cal A}_L$, $L\in\Z_{\ge0}$, denote the {\em lexicographically ordered set} of all the
multi-indices of length $L$
\begin{equation*}
 {\cal A}_L:=\left(\mathstrut^1\!\alpha,\mathstrut^2\!\alpha,\dots,
           \mathstrut^{d(L)}\!\alpha\right),\qquad
  \begin{aligned}
           &\ind{j}{\alpha}\in\Z^d_{\ge0},\ |\ind{j}{\alpha}|=L,\ j=1,\dots,d(L),\\
           &\ind{j}{\alpha}\lxl\ind{j'}{\alpha}\ \Longleftrightarrow\ j<j',
  \end{aligned}
\end{equation*}
where
\begin{equation*}
    d(L):=\dbinom{d+L-1}{L}
          =\frac{(d+L-1)!}{L!(d-1)!}
\end{equation*}
is the number of $L$-combinations with repetition from the $d$
elements.

By $\cc{\cl A}_L$ we denote a {\em concatenated set
of multi-indices}:
$$
  \cc{\cl A}_L:=\left(\cl A_0,\cl A_1,\dots,\cl A_L\right),
$$
where the comma symbol must be considered as the concatenation operator to
join 2 sets.
Actually the order of the set $\cc{\cl A}_L$ is the graded lexicographical order.
By $\cc d(L)$ denote the length of the concatenated set like $\cc{\cl A}_L$
\begin{equation*}
  \cc d(L):=d(0)+d(1)+\cdots+d(L)=\dfrac{(d+L)!}{L!d!}.
\end{equation*}

By $\cl{P}_L$, $L\in\Z_{\ge0}$,
denote the lexicographically ordered set of the monomials of total degree
$L$
\begin{equation*}
  \cl P_L(x) := \left( x^{\ind{1}{\alpha}},\dots, x^{\ind{d(L)}{\alpha}}\right),\qquad
         x\in\R^d,\ \left(\mathstrut^1\!\alpha,\dots,\mathstrut^{d(L)}\!\alpha\right)={\cal A}_L.
\end{equation*}
For $\beta\in\Z^d_{\ge0}$,
let ${\cal P}^\beta_L$ denote
the following set of the monomials
\begin{equation}\label{P(x)}
  {\cal P}^\beta_L(x):=\left(\dbinom{\ind{1}{\alpha}}{\beta}x^{\ind{1}{\alpha}-\beta},\dots,
     \dbinom{\ind{d(L)}{\alpha}}{\beta}x^{\ind{d(L)}{\alpha}-\beta}\right),\qquad
         \left(\mathstrut^1\!\alpha,\dots,\mathstrut^{d(L)}\!\alpha\right)={\cal A}_L.
\end{equation}
Similarly, define
ordered sets of the differential operators as
\begin{align}
  \cl D_L &:= \left((-i)^{L}D^{\ind{1}{\alpha}},\dots,(-i)^{L}D^{\ind{d(L)}{\alpha}}\right),
          \nonumber\\
  {\cal D}^\beta_L &:= \left((-i)^{L-|\beta|}\dbinom{\ind{1}{\alpha}}{\beta}D^{\ind{1}{\alpha}-\beta},\dots,
              (-i)^{L-|\beta|}\dbinom{\ind{d(L)}{\alpha}}{\beta}D^{\ind{d(L)}{\alpha}-\beta}\right),
           \label{DBetaVector}
\end{align}
where $\left(\mathstrut^1\!\alpha,\dots,\mathstrut^{d(L)}\!\alpha\right)={\cal A}_L$.
Note that if $\beta\not\le\ind{j}{\alpha}$,
then the $j$th entries of~\eqref{P(x)} and~\eqref{DBetaVector}
are zero.
Moreover, if $|\beta|> L$, sets~\eqref{P(x)},~\eqref{DBetaVector}
are zero sets.

By $\cc\cP_L$ and $\cc\cP^\beta_L$
denote concatenated sets of the monomials
\begin{equation}\label{ConcatenatedP(x)}
  \cc{\cl P}_L :=
  \left(\cP_0,\cP_1,\dots,\cP_L\right),\qquad
  {\cc{\cal P}}^\beta_L:=\left(\cP_0^\beta,\cP_1^\beta,\dots,\cP_L^\beta\right).
\end{equation}
The concatenated
sets of the derivatives $\cc \cD_L$, $\cc \cD_L^\beta$ are defined
similarly to~\eqref{ConcatenatedP(x)}.

In the sequel, we shall frequently
enclose the symbols of matrices in the square brackets. In particular, we shall
interpret ordered sets (for example, the sets ${\cal P}_L$, ${\cal
D}_L$) as {\em row-matrices} and enclose their symbols in the square
brackets.

For some $L,l\in\Z_{\ge0}$,
define $d(l)\times d(L)$ matrices $\bs P_L^l$, $\bs D_L^l$
as follows
\begin{equation*}
  \bs P_L^l
  :=
  \begin{bmatrix}
    \left[\cl P^{\ind{1}{\beta}}_L\right] \\[1,5ex] \left[\cl P^{\ind{2}{\beta}}_L\right] \\ \vdots \\
    \left[\cl P^{\ind{d(l)}{\beta}}_L\right]  \\[1.5ex]
  \end{bmatrix},\qquad
  \bs D_L^l
  :=
  \begin{bmatrix}
    \left[\cl D^{\ind{1}{\beta}}_L\right] \\[1.5ex] \left[\cl D^{\ind{2}{\beta}}_L\right] \\ \vdots \\
    \left[\cl D^{\ind{d(l)}{\beta}}_L\right]  \\[1.5ex]
  \end{bmatrix},
\end{equation*}
where $\left(\mathstrut^1\!\beta,\dots,\mathstrut^{d(l)}\!\beta\right)={\cal A}_l$
and $\cl P^{\ind{j}{\beta}}_L$, $\cl D^{\ind{j}{\beta}}_L$ are given by~\eqref{P(x)}
and~\eqref{DBetaVector}, respectively.
Note that, by definition,
if $l>L$, then $\bs P^l_L$, $\bs D^l_L$ are zero matrices.

Define  $\cc d(L)\times d(L)$
matrices
$\bs P_l$, $\bs D_l$, $l=0,1,\dots L$,
as follows
\begin{equation}\label{MatrixD}
  \bs P_l
  :=\begin{bmatrix}
      \bs P^0_l \\[0.5ex] \bs P^1_l \\ \vdots \\ \bs P_l^l \\ 0 \\ \vdots \\ 0 \\
    \end{bmatrix},\qquad
  \bs D_l
  :=\begin{bmatrix}
      \bs D^0_l \\[0.5ex] \bs D^1_l \\ \vdots \\ \bs D_l^l \\ 0 \\ \vdots \\ 0 \\
    \end{bmatrix}.
\end{equation}
Also define concatenated
$\cc d(L)\times\cc d(L)$ matrices $\cc{\bs P}_L$, $\cc{\bs D}_L$ as
\begin{align}
  \cc{\bs P}_L
  &:=\begin{bmatrix}
      \bs P_0 & \bs P_1 & \dots & \bs P_{L-1} & \bs P_L \\
     \end{bmatrix}
  =\begin{bmatrix}
      \bP^0_0 & \bP^0_1 & \dots & \bs P^0_{L-1} & \bs P^0_L \\
      0 & \bP^1_1 & \dots & \bP^1_{L-1} & \bP^1_L \\
      \vdots & \vdots & \ddots & \vdots & \vdots \\
      0 & 0 & \dots  & \bP^{L-1}_{L-1} & \bP^{L-1}_L \\
      0 & 0 & \dots & 0 &\bP_L^L\\
    \end{bmatrix},
    \label{ExtendedMatrixP}\\
  \cc{\bs D}_L
  &:=\begin{bmatrix}
      \bs D_0 & \bs D_1 & \dots & \bs D_{L-1} & \bs D_L \\
     \end{bmatrix}
  =\begin{bmatrix}
      \bD^0_0 & \bD^0_1 & \dots  & \bs D^0_{L-1} & \bs D^0_L \\
      0       & \bD^1_1 & \dots  & \bD^1_{L-1}   & \bD^1_L \\
      \vdots & \vdots & \ddots & \vdots & \vdots \\
      0 & 0   & \dots  & \bD^{L-1}_{L-1} & \bD^{L-1}_L \\
      0 & 0   &\dots  & 0 &\bD_L^L\\
    \end{bmatrix}.
    \label{ExtendedMatrixD}
\end{align}
Note that, in formulas~\eqref{MatrixD}--\eqref{ExtendedMatrixD},
the `$0$' symbols must be considered as zero matrices of the corresponding sizes.

\begin{remark}
The component-wise form of the matrix $\cc\bP_L$ is
\begin{equation}\label{Component-WiseP}
  \left[\cc\bP_L(x)\right]_{jk}
  =
  \left\{%
  \begin{aligned}
    &\dbinom{\ind{k}{\alpha}}{\ind{j}{\beta}}x^{\ind{k}{\alpha}-\ind{j}{\beta}}, &&
                                                                    \ind{j}{\beta}\le\ind{k}{\alpha}\,,\\
    &\quad0,                                        && \mbox{otherwise},
  \end{aligned}%
  \right.\qquad
  \begin{aligned}
    &j,k=1,2,\dots,\cc d(L),\\
    &\left(\ind{1}{\alpha},\dots,\ind{\cc d(L)}{\alpha},\right)\\
    & = \left(\ind{1}{\beta},\dots,\ind{\cc d(L)}{\beta},\right)=\cc\cA_L;
  \end{aligned}
\end{equation}
and similarly for the matrix $\cc\bD_L$.
\end{remark}

Note that the relations between the matrices of monomials and derivatives are
\begin{align*}
  &\cl D_L = \cl P_L(-iD),\quad \cl D_L^\beta = \cl P_L^\beta(-iD),\quad
    {\bs D}^l_L={\bs P}^l_L(-iD),\\
  &{\bs D}_L={\bs P}_L(-iD),\quad
    \cc{\bs D}_L=\cc{\bs P}_L(-iD).
\end{align*}

Finally define infinite-rows matrices:
\begin{equation}\label{InfiniteRowsMatrix}
  \bs\Delta_L\hat f :=
  \begin{bmatrix}
    \vdots \\ \bs D_L\hat f(2\pi\,\ind{j-1}{n}) \\ \bs D_L\hat f(2\pi\,\ind{j}{n})\\
    \bs D_L\hat f(2\pi\,\ind{j+1}{n}) \\ \vdots
  \end{bmatrix},
  \qquad
  \cc{\bs\Delta}_L\hat f :=
  \begin{bmatrix}
    \vdots \\ \cc{\bs D}_L\hat f(2\pi\,\ind{j-1}{n}) \\ \cc{\bs D}_L\hat f(2\pi\,\ind{j}{n})\\
    \cc{\bs D}_L\hat f(2\pi\,\ind{j+1}{n}) \\ \vdots
  \end{bmatrix},
\end{equation}
where the $d$-tuples $\ind{j}{n}\in\Z^d\setminus\{0\}$ are arbitrarily ordered.

\subsubsection{Ranks of the matrices $\bs D_L^l$}

In this subsection, we investigate the ranks of the matrices $\bs D^l_L$,
$l=0,\dots,L$.

First consider the matrix $\bs D_L^L$. $\bs D_L^L$ is a square
$d(L)\times d(L)$ matrix. It easy to see that the $j$th row of
the matrix $\bs D_L^L\hat f$ contains only one nonzero
element $\hat f$, which is situated on the $j$th position.
Thus $\bs D_L^L\hat f=\bs I\hat f$, where $\bs I$ is the
corresponding identity matrix.

Secondly 
$\bs D^0_L\hat f$ is a
row-matrix $\left[D^{\ind{1}{\alpha}}\hat f\  \cdots\  D^{\ind{d(L)}{\alpha}}\hat f\right]$,
where
$\left(\ind{1}{\alpha},\dots,\ind{d(L)}{\alpha}\right)={\cA}_L$.
Consequently the matrix $\bs D^0_L\hat f$ has the nonzero rank iff
there exists at least one multi-index $\ind{j}{\alpha}\in\cA_L$
such that $D^{\ind{j}{\alpha}}\hat f\ne0$.

Now consider the matrices $\bs D^l_L$, $l=1,\dots,L-1$. Note
that $\bs D^l_L$ is a $d(l)\times d(L)$ matrix.

\begin{theorem}
Let $L\in\N$. The $d(l)\times d(L)$ matrix $\bs D^l_L\hat f$,
$l=1,\dots,L-1$, has the full rank, i.\,e., the rank of $\bs D^l_L\hat f$ is
equal to $d(l)$, if and only if there exists at least one nonzero
derivative $D^{\gamma}\hat f$, $|\gamma|=L-l$.
\end{theorem}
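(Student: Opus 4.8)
The plan is to reduce the rank question to linear independence of the $d(l)$ rows of $\bs D^l_L\hat f$, and then to a single polynomial identity in an auxiliary variable $y\in\R^d$. First, by the definition~\eqref{DBetaVector} of $\cl D^\beta_L$ (equivalently, by~\eqref{Component-WiseP}), up to the nonzero factor $(-i)^{L-l}$ every entry of $\bs D^l_L\hat f$ is a binomial coefficient times a derivative $D^\gamma\hat f$ with $|\gamma|=L-l$. Hence, if $D^\gamma\hat f\equiv 0$ for every $\gamma$ with $|\gamma|=L-l$, then $\bs D^l_L\hat f$ is the zero matrix and its rank is $0<d(l)$; this is already one implication, in contrapositive form. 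Note also that $d(l)\le d(L)$, since $m\mapsto\binom{d+m-1}{m}$ is non-decreasing, so $d(l)$ is the largest rank a $d(l)\times d(L)$ matrix can have.

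For the converse, suppose that $\sum_{|\beta|=l}c_\beta$ times the $\beta$-th row of $\bs D^l_L\hat f$ is the zero row; after cancelling $(-i)^{L-l}$ this says
\[
\sum_{|\beta|=l}c_\beta\binom{\alpha}{\beta}D^{\alpha-\beta}\hat f=0\qquad\text{for every }\alpha\in\Z^d_{\ge0},\ |\alpha|=L .
\]
Multiplying the identity indexed by $\alpha$ by $y^\alpha/\alpha!$, summing over all such $\alpha$, and then re-indexing by $\alpha=\beta+\gamma$ using $\binom{\alpha}{\beta}/\alpha!=1/(\beta!\,(\alpha-\beta)!)$, one sees that the resulting degree-$L$ homogeneous polynomial in $y$ factors as
\[
\Bigl(\sum_{|\beta|=l}\frac{c_\beta}{\beta!}\,y^\beta\Bigr)\Bigl(\sum_{|\gamma|=L-l}\frac{D^\gamma\hat f}{\gamma!}\,y^\gamma\Bigr)=0 .
\]
In fact the $y^\alpha$-coefficient of the left-hand side equals $1/\alpha!$ times the left-hand side of the $\alpha$-th identity, so the row relation is \emph{equivalent} to this product of homogeneous polynomials of degrees $l$ and $L-l$ being the zero polynomial in $y$. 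Now pick any point $\xi$ at which some $D^{\gamma_0}\hat f$ with $|\gamma_0|=L-l$ does not vanish: there the second factor is a nonzero element of the integral domain $\C[y_1,\dots,y_d]$, so the first factor must be zero, i.e.\ $c_\beta=0$ for all $|\beta|=l$. Therefore the rows are linearly independent and $\rank\bs D^l_L\hat f=d(l)$.

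The whole argument is essentially self-contained; the only place requiring care is the re-indexing $\alpha=\beta+\gamma$ of the double sum and the observation that passing from the $d(L)$ scalar identities to the one polynomial identity loses no information (one recovers the $\alpha$-th identity as a coefficient). I do not expect a genuine obstacle here: once the generating polynomial is written down, the integral-domain property of the polynomial ring finishes the proof, and reading ``$D^\gamma\hat f\ne0$'' as ``$D^\gamma\hat f$ is not identically zero'' is exactly what makes the statement an equivalence. Running the same computation at one fixed $\xi$ yields, if desired, the sharper pointwise statement: $\bs D^l_L\hat f$ has rank $d(l)$ at $\xi$ if and only if $D^\gamma\hat f(\xi)\ne0$ for some $\gamma$ with $|\gamma|=L-l$.
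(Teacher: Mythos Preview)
Your argument is correct. The paper itself does not prove this theorem --- it defers the proof to a forthcoming paper --- so there is no ``paper's proof'' to compare against; your generating-function approach (packaging the $d(L)$ scalar row identities into a single homogeneous polynomial identity in $y$ and then invoking the integral-domain property of $\C[y_1,\dots,y_d]$) is a clean, self-contained way to establish the result, and the pointwise reading you give at the end is precisely the version the paper needs when these matrices are later evaluated at the points $2\pi n$.
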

To prove this theorem, we refer the reader to the forthcoming paper.

Hence we see that each of the matrices $\bs D^l_L
\hat f$,
$l=0,\dots,L$, is either a full rank
matrix or zero matrix.


\subsection{Definitions and Theorems}

\subsubsection{General case}

We begin with definitions.
Following to C.~de~Boor, see~\cite{deBoor},
we change slightly the definition of space~\eqref{SDefinition}.
\begin{definition}
By definition, put
\begin{equation}\label{SDefinitionNew}
  {\cV}:=\set{P}{P\in\Pi,\,\left(P(-iD)\hat f\right)(2\pi k)=0,\ k\in\Z^d\setminus\{0\}},
\end{equation}
where $\Pi$ is the space of all polynomials on $\R^d$
and $f\in S'(\R^d)$ is a compactly supported function.
By
$\cV_{\rm sh}$ we denote the {\em largest shift-invariant subspace} of the space $\cV$ given by~\eqref{SDefinitionNew}.
\end{definition}

Let us recall the C.~de~Boor statement on a polynomial space in the span of integer shifts of a compactly
supported function.
\begin{theorem}[C.~de~Boor~{\cite[Proposition~2.2]{deBoor}}]
Suppose a function $f\in S'(\R^d)$ is compactly supported, then
$$
  \Pi\cap\Span\left\{f(\cdot-k),k\in\Z^d\right\}=\cV_{\rm sh}.
$$
\end{theorem}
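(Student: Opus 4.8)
The plan is to establish the asserted equality as two inclusions. Throughout write $S(f):=\Span\{f(\cdot-k):k\in\Z^d\}$ and note that, since $f$ has compact support, $\sum_kc_kf(\cdot-k)$ is locally a finite sum and hence a well-defined element of $S'(\R^d)$ for every array $c=(c_k)_{k\in\Z^d}$. The inclusion $\Pi\cap S(f)\subseteq\cV_{\rm sh}$ is the easy one: $\Pi\cap S(f)$ is visibly a linear subspace of $\Pi$, and it is shift-invariant because translating the array $c$ translates the sum, so by maximality of $\cV_{\rm sh}$ it suffices to check $\Pi\cap S(f)\subseteq\cV$. For this I would pass to the Fourier side. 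Given $p=\sum_kc_kf(\cdot-k)\in\Pi$, a local argument on a single unit cube (on which only boundedly many translates of $f$ are active) lets one take $c$ of at most polynomial growth, so $\tau(\xi):=\sum_kc_ke^{-ik\cdot\xi}$ is a $2\pi\Z^d$-periodic tempered distribution and $\hat p=\tau\,\hat f$. By the Paley--Wiener theorem $\hat f$ is entire and (we may assume $f\neq0$) not identically zero, while $\hat p$ is supported at the origin; hence $\tau\hat f$ vanishes near every $2\pi j$ with $j\neq0$. Dividing by $\hat f$ off its nowhere-dense zero set and using the $2\pi\Z^d$-periodicity of $\tau$ to transport its local structure from $0$ to $2\pi j$, one reads off in particular the relations $(p(-iD)\hat f)(2\pi j)=0$, i.e.\ $p\in\cV$.

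The reverse inclusion $\cV_{\rm sh}\subseteq\Pi\cap S(f)$ is the substantive half. First I would describe $\cV_{\rm sh}$ intrinsically: if $p\in\cV_{\rm sh}$ then $p(\cdot-m)\in\cV$ for every $m\in\Z^d$, and expanding $(p(\cdot-m))(-iD)\hat f=\sum_\gamma\frac{(-m)^\gamma}{\gamma!}\,\big((D^\gamma p)(-iD)\hat f\big)$ in powers of $m$ forces $(D^\gamma p)(-iD)\hat f$ to vanish on $2\pi\Z^d\setminus\{0\}$ for every $\gamma$; hence $\cV_{\rm sh}$ is closed under differentiation, and in fact $\cV_{\rm sh}$ is the largest $D$-invariant subspace of $\cV$ (a $D$-invariant subspace is automatically shift-invariant, via $p(\cdot-m)=\sum_\gamma\frac{(-m)^\gamma}{\gamma!}D^\gamma p$). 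Next, for $r\in\cV_{\rm sh}$, I would reproduce a polynomial from the array $c_k=r(k)$: writing $r(k)=\sum_\beta\frac1{\beta!}(D^\beta r)(x)(k-x)^\beta$ and periodizing term by term,
\[
  \sum_kr(k)\,f(x-k)=\sum_\beta\frac{(-1)^{|\beta|}}{\beta!}(D^\beta r)(x)\,\mu_\beta(x),\qquad
  \mu_\beta(x):=\sum_k(x-k)^\beta f(x-k),
\]
where $\mu_\beta$ is the $1$-periodization of $y^\beta f(y)$, with Fourier coefficients proportional to $(D^\beta\hat f)(2\pi j)$; collecting the nonzero frequencies, the right-hand side is a polynomial plus $\sum_{j\neq0}e^{2\pi ij\cdot x}\rho_j(x)$, where $\rho_j(x)\propto\sum_\beta\frac{(-i)^{|\beta|}}{\beta!}(D^\beta\hat f)(2\pi j)(D^\beta r)(x)$, and one checks that $\rho_j\equiv0$ if and only if $(D^\delta r)(-iD)\hat f$ vanishes at $2\pi j$ for all $\delta$ — which is exactly the condition $r\in\cV_{\rm sh}$ (ranging over all $j\neq0$). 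Thus $r\mapsto\sum_kr(k)f(\cdot-k)$ maps $\cV_{\rm sh}$ into $\Pi\cap S(f)$; and when $\hat f(0)\neq0$ this map is triangular with respect to total degree, hence injective, so (since $\cV_{\rm sh}$ is finite-dimensional by a standard argument, and its image lies in $\cV_{\rm sh}$ by the first inclusion) it is onto $\cV_{\rm sh}$ and every $p\in\cV_{\rm sh}$ is reproduced.

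The main obstacle is the degenerate case $\hat f(0)=0$: then $r\mapsto\sum_kr(k)f(\cdot-k)$ drops total degree and the surjectivity argument above fails, so one must instead carry out a careful ``formal division by $\hat f$ at the origin'' (equivalently, a dimension count against the null-spaces of the derivative matrices $\bs\Delta_L\hat f$) to conclude that the reproducible subspace is still \emph{exactly} $\cV_{\rm sh}$ and not something smaller. This, together with the two softer technical points — reducing a general representing array to one of polynomial growth in the first inclusion, and dividing a distribution by the real-analytic function $\hat f$ near each lattice point in spite of the possibly large zero set of $\hat f$ — is where the real work lies; it is the content of de~Boor's argument in~\cite{deBoor}.
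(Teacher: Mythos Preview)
The paper does not prove this theorem: it is stated with attribution to de~Boor~\cite{deBoor} and then used as an external input. The paper's own work begins with the \emph{next} theorem, the matrix characterization $\te\cV=\cV_{\rm sh}$, whose proof is purely linear-algebraic and presupposes de~Boor's result. So there is no ``paper's proof'' to compare against here.

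Your sketch is in fact a faithful reconstruction of de~Boor's original argument in~\cite{deBoor}, and its architecture is correct: the Fourier-side analysis of $\tau\hat f$ for the inclusion $\Pi\cap S(f)\subseteq\cV$, the identification of $\cV_{\rm sh}$ with the largest $D$-invariant subspace of $\cV$, and the semi-discrete convolution $r\mapsto\sum_k r(k)f(\cdot-k)$ expanded through the periodized moments $\mu_\beta$ are precisely the ingredients de~Boor uses. You have also correctly isolated the three genuine technical obstacles (polynomial growth of a representing array, division of a distribution by the entire function $\hat f$ near lattice points, and the degenerate case $\hat f(0)=0$). Two small points worth tightening: first, the finite-dimensionality of $\cV_{\rm sh}$ that you invoke ``by a standard argument'' rests on the Paley--Wiener fact that a nonzero entire $\hat f$ cannot vanish to infinite order at any $2\pi j$, which caps the Strang--Fix order --- this is short but should be said. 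Second, in the surjectivity step it is cleaner to phrase the conclusion as: $T$ is an injective endomorphism of the finite-dimensional space $\cV_{\rm sh}$ (using the first inclusion to see $T(\cV_{\rm sh})\subseteq\Pi\cap S(f)\subseteq\cV_{\rm sh}$), hence bijective, hence $\cV_{\rm sh}=T(\cV_{\rm sh})\subseteq\Pi\cap S(f)$.
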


\begin{definition}
Suppose $A$ is an $n\times m$ matrix.
Put
\begin{equation*}
  \ker A:=\set{v\in\R^{m}}{Av=0}.
\end{equation*}
We say that the linear space $\ker A$
is the (right) {\em null-space of the matrix} $A$.
\end{definition}

Here and in the sequel, we shall always consider the vector in the matrix-vector multiplication as
a column-matrix. Moreover,
we shall also suppose that the matrix-vector multiplication
is distributive over a (countable or even uncountable) set of vectors (points) of
an Euclidian space: $A\set{s}{s\in\R^d}:=\set{As}{s\in\R^d}$,
where $A$ is a $d\times d$ matrix.

\begin{definition}\label{Definition:LDefinition}
Let a function $f\in S'(\R^d)$ be compactly supported.
Let matrices $\cc\Delta_l\hat f$, $l\in\Z_{\ge0}$, be given by~\eqref{InfiniteRowsMatrix}.
By definition,
put
\begin{equation}\label{LDefinition}
  L:=\min\set{l\in\Z_{\ge0}}{%
    \dim\ker\cc\Delta_l\hat f=\max\set{\dim\ker\cc\Delta_m\hat f}{m\in\Z_{\ge0}}},
\end{equation}
i.\,e., $L$ is
the minimal $l\in\Z_{\ge0}$ such that $\dim\ker\cc\Delta_l\hat f$
is maximal;
then we say that $L$ is the
{\em order of the Strang--Fix conditions}.
\end{definition}

\begin{definition}
By $V$ denote the null-space of the matrix $\cc\Delta_L\hat f$:
\begin{equation}\label{VDefinition}
  V:=\ker\cc\Delta_L\hat f,
\end{equation}
where $L$ is the order of the Strang--Fix conditions defined by~\eqref{LDefinition}.
\end{definition}

Let $L\ge0$.
Suppose $V\subseteq\R^{\cc d(L)}$ is a linear space
(for example, given by~\eqref{VDefinition}); then $V$ always can be decomposed as follows
\begin{equation}\label{RDecomposition}
  V= V^0\oplus V^1\oplus\cdots\oplus V^{L},
\end{equation}
where the subspace $V^l$, $l=0,\dots,L$, corresponds to the subset ${\cD}_l$
of the set $\cc\cD_{L}$; i.\,e., if a vector $v\in V$ belongs to $V^l$, then
$v$ is necessarily of the form $v=\left(0,\dots, 0,\,v_{\cc d(l-1)+1},\dots,v_{\cc d(l)},\,0, \dots,0\right)$
(where $\cc d(-1):=0$).

Now we can reformulate the definition of the Strang--Fix conditions order, see~\eqref{LDefinition}.

\begin{definition}
Let a function $f\in S'(\R^d)$ be compactly supported. Let $L\ge0$ and let $V:=\ker\cc\Delta_L\hat f$.
Suppose $V$ is decomposed like~\eqref{RDecomposition};
then $L$ is the order of the Strang--Fix conditions iff
all the subspaces $V^l$, $l=0,1,\dots,L$, are nonzero and $V^{L+1}$
is the zero space.
\end{definition}

To proof the fact that this definition coincides with Definition~\ref{Definition:LDefinition},
we refer the reader to the forthcoming paper.

\begin{definition}
Let $L$ be the order of the Strang--Fix conditions.
Let a linear space $V$ be given by~\eqref{VDefinition} and the set $\cc\cP_L$ be given
by~\eqref{ConcatenatedP(x)}; then by $\te\cV$ we denote the following polynomial space
\begin{equation}\label{cVDefinition}
  \te\cV:=\set{\left[\cc\cP_L\right]v}{v\in V}.
\end{equation}
\end{definition}

Below we present the main theorem of this section.

\begin{theorem}
Let a function $f\in S'(\R^d)$ be compactly supported. Let $L$ be the order of the Strang--Fix conditions,
let the polynomial spaces $\cV$ and $\te\cV$ be given by~\eqref{SDefinitionNew} and~\eqref{cVDefinition}, respectively;
then we have
$$
  \te\cV=\cV_{\rm sh}.
$$
\end{theorem}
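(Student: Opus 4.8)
The plan is to prove the two inclusions $\te\cV\subseteq\cV_{\rm sh}$ and $\cV_{\rm sh}\subseteq\te\cV$ separately, using the matrix machinery built above to translate ``$P$ lies in $\cV$'' into a linear-algebra condition on the coefficient vector of $P$. First I would fix the basic dictionary. Any polynomial $P\in\Pi_{\le L}$ is $P(x)=[\cc\cP_L(x)]\,v$ for a unique coefficient vector $v\in\R^{\cc d(L)}$, so $P(-iD)=[\cc\cD_L]\,v$ where $\cc\cD_L=\cc\cP_L(-iD)$ as recorded after the Remark. Hence the condition $(P(-iD)\hat f)(2\pi\,\ind{j}{n})=0$ for all $\ind{j}{n}\in\Z^d\setminus\{0\}$ is exactly the statement that $v$ annihilates every block row $\cc\bD_L\hat f(2\pi\,\ind{j}{n})$, i.e. $\cc\bs\Delta_L\hat f\,v=0$, i.e. $v\in V=\ker\cc\bs\Delta_L\hat f$. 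So $\te\cV$ is precisely the set of polynomials of degree $\le L$ that lie in $\cV$. This already gives $\te\cV\subseteq\cV$, and I must upgrade this to $\te\cV\subseteq\cV_{\rm sh}$, i.e. show $\te\cV$ is shift-invariant.

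For shift-invariance of $\te\cV$, the key observation is that the matrix $\cc\bP_L(x)$ is the ``translation/Pascal'' matrix: its $(j,k)$ entry is $\binom{\ind{k}{\alpha}}{\ind{j}{\beta}}x^{\ind{k}{\alpha}-\ind{j}{\beta}}$ by the Remark, which is exactly the multivariate binomial formula in disguise, so $\cc\bP_L(x+y)=\cc\bP_L(x)\,\cc\bP_L(y)$ (a cocycle/semigroup identity), with $\cc\bP_L(0)=\bs I$. Consequently, for $P(x)=[\cc\cP_L(x)]v$ we have the shifted polynomial $P(x-t)=[\cc\cP_L(x)]\,(\cc\bP_L(-t)\,v)$; so $\te\cV$ is shift-invariant if and only if $V$ is invariant under all the matrices $\cc\bP_L(t)$, $t\in\R^d$. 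To see that invariance, differentiate the refinement-type identity $\hat f$ satisfies: $\cc\bD_L\hat f$ at a shifted argument is related to $\cc\bD_L\hat f$ at the base point by the same Pascal matrix, because $D^{\alpha}$ applied at $2\pi n$ and the monomial-shift operator are intertwined by Leibniz's rule — concretely one checks $\cc\bD_L\hat f(2\pi n)=\cc\bP_L(\cdot)\big|$-type relations that make $\ker\cc\bs\Delta_L\hat f$ a $\{\cc\bP_L(t)\}$-invariant subspace. Then $v\in V\Rightarrow\cc\bP_L(-t)v\in V\Rightarrow P(\cdot-t)\in\te\cV$, so $\te\cV\subseteq\cV_{\rm sh}$.

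For the reverse inclusion $\cV_{\rm sh}\subseteq\te\cV$: by de~Boor's theorem quoted above, $\cV_{\rm sh}=\Pi\cap\Span\{f(\cdot-k)\}$, and since $f$ is compactly supported this span of integer shifts can only contain polynomials of bounded degree; more precisely I would argue that any shift-invariant subspace of $\cV$ consisting of polynomials has a degree bound, and that bound is controlled by $L$. The cleanest route is to use the last reformulated definition of $L$: $L$ is characterized by $V^l\ne 0$ for $l=0,\dots,L$ and $V^{L+1}=0$ in the decomposition~\eqref{RDecomposition}. From this one shows $\cV$ contains no polynomial of degree exactly $L+1$ that sits in a shift-invariant subspace — because a shift-invariant space containing a degree-$(L+1)$ polynomial must, by repeatedly applying the ``shift minus identity'' operators (which lower degree by the Pascal structure), contain nonzero homogeneous pieces of every lower degree, forcing $V^{L+1}\ne0$, a contradiction. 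Hence $\cV_{\rm sh}\subseteq\Pi_{\le L}\cap\cV=\te\cV$, where the last equality is the dictionary from the first paragraph together with the already-established shift-invariance. Combining both inclusions gives $\te\cV=\cV_{\rm sh}$.

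I expect the main obstacle to be the degree-bound argument in the last paragraph: making rigorous that a shift-invariant polynomial subspace of $\cV$ cannot reach degree $L+1$, i.e. correctly connecting the abstract maximality/minimality defining $L$ in~\eqref{LDefinition} with the block structure~\eqref{RDecomposition} and the action of the finite-difference (shift) operators on $V$. The forward direction (shift-invariance of $\te\cV$) is essentially the Pascal-matrix cocycle identity plus Leibniz, which is routine; the delicate point is ensuring that the ``largest shift-invariant subspace'' is captured exactly — not just contained in — by $V$, which is where the minimality of $L$ (rather than merely some admissible $l$) and the nonvanishing of all blocks $V^0,\dots,V^L$ must be used.
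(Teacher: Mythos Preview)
Your basic dictionary in the first paragraph is wrong, and the error is exactly what breaks the reverse inclusion. The condition $(P(-iD)\hat f)(2\pi n)=0$ for all $n\ne 0$ says only that the \emph{first} row $[\cc\cD_L\hat f(2\pi n)]$ of each block $\cc\bD_L\hat f(2\pi n)$ annihilates $v$; it does \emph{not} say that every row does. Membership in $V=\ker\cc\bs\Delta_L\hat f$ requires the full $\cc d(L)\times\cc d(L)$ matrix $\cc\bD_L\hat f(2\pi n)$ (rows indexed by all $\beta$ with $|\beta|\le L$) to kill $v$, which is strictly stronger. Hence only $\te\cV\subseteq\Pi_{\le L}\cap\cV$ holds, and the equality $\Pi_{\le L}\cap\cV=\te\cV$ you invoke at the end is false in general.

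This is precisely the step where the paper's argument works and yours does not. Given $P\in\cV_{\rm sh}$ with coefficient vector $v$, the paper uses Lemma~\ref{Lemma:P(x+y)} to compute
\[
  \bigl(P(-iD+h)\hat f\bigr)(2\pi n)=[\cc\cP_L(h)]\,[\cc\bD_L\hat f(2\pi n)]\,v .
\]
Shift-invariance of $P$ forces this to vanish for \emph{every} $h\in\R^d$; since the entries of $[\cc\cP_L(h)]$ are linearly independent monomials in $h$, each component of the column $[\cc\bD_L\hat f(2\pi n)]v$ must be zero, i.e.\ $v\in V$. Your phrase ``together with the already-established shift-invariance'' points the wrong way: the shift-invariance of $\te\cV$ buys nothing here; what is needed is the shift-invariance of the \emph{given} $P$, converted via the Pascal-matrix identity into the full set of row conditions. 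Your forward-direction sketch (invariance of $V$ under $\cc\bP_L(t)$) is essentially the same identity read in the other direction and is fine once made precise; and your degree-bound discussion, while more elaborate than what the paper writes, is a separate issue that does not rescue the gap above.
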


\begin{lemma}\label{Lemma:P(x+y)}
Let $L\ge0$ and let the set $\cc\cP_L$ be given by~\eqref{ConcatenatedP(x)},
then we have
\begin{equation}\label{Pxh}
  \left[\cc\cP_L(x+y)\right] = \left[\cc\cP_L(x)\right]\left[\cc\bP_L(y)\right],
\end{equation}
where $x,y\in\R^d$ and the matrix $\cc\bP_L$ is given by~\eqref{ExtendedMatrixP}.
\end{lemma}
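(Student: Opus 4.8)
The plan is to verify the identity componentwise and reduce it to the multidimensional binomial formula. Fix $x,y\in\R^d$. By the definition of $\cc\cP_L$ in~\eqref{ConcatenatedP(x)} together with the lexicographic enumeration $\cc\cA_L=(\ind{1}{\alpha},\dots,\ind{\cc d(L)}{\alpha})$, the left-hand row-matrix has $(x+y)^{\ind{k}{\alpha}}$ in its $k$th slot. On the right-hand side, $\left[\cc\cP_L(x)\right]$ is the row-matrix whose $j$th entry is $x^{\ind{j}{\alpha}}$, and $\left[\cc\bP_L(y)\right]$ is the $\cc d(L)\times\cc d(L)$ matrix whose $(j,k)$ entry, by the Remark and formula~\eqref{Component-WiseP}, equals $\binom{\ind{k}{\alpha}}{\ind{j}{\beta}}y^{\ind{k}{\alpha}-\ind{j}{\beta}}$ when $\ind{j}{\beta}\le\ind{k}{\alpha}$ and $0$ otherwise, where we use the same enumeration for the $\beta$'s as for the $\alpha$'s. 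So I would first unwind all three objects to this explicit componentwise level.

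Next I would compute the $k$th entry of the product $\left[\cc\cP_L(x)\right]\left[\cc\bP_L(y)\right]$: it is
\begin{equation*}
  \sum_{j=1}^{\cc d(L)} x^{\ind{j}{\alpha}}\left[\cc\bP_L(y)\right]_{jk}
  = \sum_{\substack{j\,:\,\ind{j}{\alpha}\le\ind{k}{\alpha}}}
       \binom{\ind{k}{\alpha}}{\ind{j}{\alpha}}\,x^{\ind{j}{\alpha}}\,y^{\ind{k}{\alpha}-\ind{j}{\alpha}},
\end{equation*}
where the convention $\binom{\ind{k}{\alpha}}{\ind{j}{\alpha}}=0$ for $\ind{j}{\alpha}\nleq\ind{k}{\alpha}$ lets me drop the restriction under the sum. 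The key observation to make is that as $j$ ranges over $1,\dots,\cc d(L)$, the multi-index $\ind{j}{\alpha}$ ranges exactly once over every multi-index of length $\le L$, hence in particular over every $\beta\le\ind{k}{\alpha}$ (since $\beta\le\ind{k}{\alpha}$ forces $|\beta|\le|\ind{k}{\alpha}|\le L$, so $\beta$ does appear among the $\ind{j}{\alpha}$). Therefore the sum equals $\sum_{\beta\le\ind{k}{\alpha}}\binom{\ind{k}{\alpha}}{\beta}x^{\beta}y^{\ind{k}{\alpha}-\beta}$, which by the multidimensional binomial formula recalled in the Notations is precisely $(x+y)^{\ind{k}{\alpha}}$, i.e. the $k$th entry of $\left[\cc\cP_L(x+y)\right]$. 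Since $k$ was arbitrary, the two row-matrices agree.

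I do not anticipate a genuine obstacle here; the statement is essentially a bookkeeping lemma. The one point that needs a little care — and the only place an argument could go wrong — is the matching of index sets: one must make sure that the concatenated ordering $\cc\cA_L=(\cA_0,\cA_1,\dots,\cA_L)$ used for the columns/rows of $\cc\bP_L$ is the same graded-lexicographic list used for the entries of $\cc\cP_L$, and that the $\beta$-enumeration in~\eqref{Component-WiseP} is identical to the $\alpha$-enumeration. Once that identification is pinned down (as~\eqref{Component-WiseP} already asserts), the computation above closes the proof. It is also worth noting for completeness that the matrix $\cc\bP_L(y)$ is block upper-triangular with identity blocks $\bP_L^L=\bs I$ (cf.\ the rank discussion preceding the Theorem), which is consistent with the identity specializing correctly at $y=0$ to $\left[\cc\cP_L(x)\right]=\left[\cc\cP_L(x)\right]\bs I$.
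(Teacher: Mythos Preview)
Your proposal is correct and follows exactly the line the paper indicates: the paper omits the argument, stating only that \eqref{Pxh} ``is the direct consequence of the binomial formula and formula~\eqref{Component-WiseP},'' and your componentwise computation is precisely that direct consequence, with the index-matching caveat you flag being the only bookkeeping detail.
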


We omit the proof of the lemma
and note only that formula~\eqref{Pxh} is the direct consequence of the binomial formula and
formula~\eqref{Component-WiseP}.

\begin{proof}[Proof of the theorem]
Suppose a polynomial $P$ belongs to $\te\cV$; then, by~\eqref{cVDefinition},
$P(x)=\left[\cc\cP_L(x)\right]v$ for some $v\in V$.
Thus, for any $n\in\Z^d\setminus\{0\}$, we have
$\left(P(-iD)\hat f\right)(2\pi n)=\left[\cc\cD_L\hat f(2\pi n)\right]v$.
Since, for any $n\in\Z^d\setminus\{0\}$, the row-matrix $\left[\cc\cD_L\hat f(2\pi n)\right]$
is the first row of the corresponding matrix $\cc\bD_L\hat f(2\pi n)$
(see~\eqref{ExtendedMatrixD}) and $v\in V$; we have $\left[\cc\cD_L\hat f(2\pi n)\right]v=0$. Hence
$\left(P(-iD)\hat f\right)(2\pi n)=0$, $\forall n\in\Z^d\setminus\{0\}$; consequently $P\in\cV$.

For an arbitrary {\em shift} $h\in\R^d$, consider the polynomial $P(x+h)$. Using Lemma~\ref{Lemma:P(x+y)}, we have
$P(x+h)=\left[\cc\cP_L(x+h)\right]v
  = \left[\cc\cP_L(h)\right]\left[\cc\bP_L(x)\right]v$, where
$\cc\bP_L$ is matrix~\eqref{ExtendedMatrixP}. Thus, for any $n\in\Z^d\setminus\{0\}$,
we obtain $P(-iD+h)\hat f(2\pi n)=\left[\cc\cP_L(h)\right]\left[\cc\bD_L\hat f(2\pi n)\right]v=0$.
Consequently $\te\cV\subseteq\cV_{\rm sh}$.

Now we prove the contrary sentence: $\cV_{\rm sh}\subseteq\te\cV$.
Let $P\in\cV_{\rm sh}\subseteq\cV$, then there exists a vector $v\in\R^{\cc d(L)}$ such that
$P(x)=\left[\cc\cP_L(x)\right]v$.

Assume the converse: $P\not\in\te\cV$; then $v\not\in V$. Consequently
there are exist at least one point $n\in\Z^d\setminus\{0\}$ and
multi-index $\beta\in\Z^d_{\ge0}$,
$|\beta|\le L$, such that $\left[\cc\cD_L^\beta\hat f(2\pi n)\right]v\ne0$. Suppose
$\beta\ne\left(0,\dots,0\right)$, then
there exists a shift $h\in\R^d$ such that $\left(P(-iD+h)\hat f\right)(2\pi n)\ne0$. Thus
$P\not\in\cV_{\rm sh}$. (If $\beta=\left(0,\dots,0\right)$, then $\left[\cc\cD_L\hat f(2\pi n)\right]v\ne0$
for some $n\in\Z^d\setminus\{0)\}$; and consequently $\left(P(-iD)\hat f\right)(2\pi n)\ne0$. Hence $P\not\in\cV$.)
This contradiction proves that $\cV_{\rm sh}\subseteq\te\cV$. This completes the proof.
\end{proof}

Finally we state and prove an auxiliary theorem, which will be useful later.

\begin{theorem}\label{Theorem:PolynomialFromKernel}
Let $P(x)$, $x\in\R^d$, be an algebraic polynomial of total degree $L\ge 0$.
Let the matrix $\cc\bD_L$
be given by~\eqref{ExtendedMatrixD}. Then an
algebraic polynomial $\left[\cc\cP_L\right] v$,
$v\in\R^{\cc d(L)}$,
belongs to $\ker P(-iD)$ iff $v\in\ker \cc\bD_L P(0)$.
\end{theorem}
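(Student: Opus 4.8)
The plan is to unwind both sides of the claimed equivalence into statements about the coefficient vector $v$ and then match them via the component-wise description~\eqref{Component-WiseP} together with the multivariate Leibniz rule. First I would write $P(x)=\left[\cc\cP_L(x)\right]v$, so that applying $P(-iD)$ to $\hat f$ — or rather, since here we want $P(-iD)$ as a differential operator acting on functions — we use the translation identity from Lemma~\ref{Lemma:P(x+y)}: replacing $x$ by $x+y$ gives $P(x+y)=\left[\cc\cP_L(y)\right]\left[\cc\bP_L(x)\right]v$, and hence, reading off the Taylor expansion of $P$ about a point, the vector of all derivatives $\left(D^\beta P(0)\right)_{|\beta|\le L}$ is, up to the normalizing powers of $-i$ built into $\cc\cD_L$ and the factorials, exactly $\left[\cc\bP_L(0)\right]v=\left[\cc\bD_L(0)\right]\!v$ after the substitution $x\mapsto -iD$ that relates $\cc\bP_L$ to $\cc\bD_L$. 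Concretely, the entries of $\cc\bP_L(0)$ are nonzero only where $\ind{k}{\alpha}=\ind{j}{\beta}$ (the $x^0$ terms survive at $x=0$), so $\cc\bD_L P(0)$ picks out precisely the reindexed derivative data of $P$.

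Next I would expand what it means for $\left[\cc\cP_L\right]v$ to lie in $\ker P(-iD)$. Writing $Q(x):=\left[\cc\cP_L(x)\right]v$, the condition is $P(-iD)Q \equiv 0$ as a polynomial. Since $Q$ has total degree $\le L$ and $P$ has total degree exactly $L$, the operator $P(-iD)$ lowers degree by $L$, so $P(-iD)Q$ is a constant, and that constant is a bilinear pairing between the coefficients of $P$ and the top-degree part of $Q$; more generally, to capture the full kernel condition one pairs every homogeneous component. The key algebraic fact is that this pairing is exactly the one encoded by the matrix $\cc\bD_L$ evaluated at the coefficient vector of $P$: the $(j,k)$ entry $\binom{\ind{k}{\alpha}}{\ind{j}{\beta}}D^{\ind{k}{\alpha}-\ind{j}{\beta}}$ is the Leibniz/binomial bookkeeping that arises when one differentiates a monomial $x^{\ind{k}{\alpha}}$ by $D^{\ind{j}{\beta}}$. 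So $P(-iD)\left(\left[\cc\cP_L\right]v\right)=0$ unwinds, term by term in the graded-lexicographic order, to the linear system $\cc\bD_L P(0)\,v=0$, i.e.\ $v\in\ker\cc\bD_L P(0)$; and conversely each linear equation in that system is exactly one of the homogeneous-component equations of $P(-iD)Q\equiv 0$.

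The main obstacle, I expect, is the bookkeeping: getting the indexing, the factors $(-i)^{|\cdot|}$, and the binomial coefficients to line up so that "$P(-iD)$ applied to $\left[\cc\cP_L\right]v$" is literally the matrix product $\left[\,\cdot\,\right]\bigl(\cc\bD_L P(0)\bigr)v$ rather than merely proportional to it entrywise. The clean way to handle this is to prove the operator identity
\begin{equation*}
  P(-iD)\left[\cc\cP_L(x)\right] = \left[\cc\cP_L(x)\right]\,\bigl(\cc\bD_L P(0)\bigr)
\end{equation*}
as an identity of row-vector-valued functions of $x$ — this follows from Lemma~\ref{Lemma:P(x+y)} by applying $P(-iD_y)$ to both sides of~\eqref{Pxh} and evaluating at $y=0$, using that $P(-iD_y)\left[\cc\cP_L(y)\right]\big|_{y=0}$ is the first-row extraction of $\cc\bD_L P(0)$ in the same way the proof of the previous theorem extracted $\left[\cc\cD_L\hat f(2\pi n)\right]$ as the first row of $\cc\bD_L\hat f(2\pi n)$. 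Once that operator identity is in hand, the theorem is immediate: $P(-iD)\left(\left[\cc\cP_L\right]v\right)=\left[\cc\cP_L\right]\bigl(\cc\bD_L P(0)\bigr)v$ vanishes identically if and only if $\bigl(\cc\bD_L P(0)\bigr)v=0$, because the monomials in $\cc\cP_L$ are linearly independent. I would present the operator identity as a short lemma, cite Lemma~\ref{Lemma:P(x+y)} and the component-wise formula~\eqref{Component-WiseP} for the one-line verification, and then close with the two-line argument above.
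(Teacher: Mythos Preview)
Your approach is correct and differs from the paper's. The operator identity
\[
  P(-iD)\left[\cc\cP_L(x)\right] \;=\; \left[\cc\cP_L(x)\right]\,\cc\bD_L P(0)
\]
does hold, and once you have it the theorem is immediate by linear independence of the monomials in $\cc\cP_L$. Two small cleanups: on the right side of~\eqref{Pxh} the $y$-dependent factor is the \emph{matrix} $\cc\bP_L(y)$, not the row $\cc\cP_L(y)$, so what you actually need is $P(-iD_y)\!\left[\cc\bP_L(y)\right]\big|_{y=0}=\cc\bD_L P(0)$ (the full matrix identity, not just its first row); this is the one-line entrywise check you allude to via~\eqref{Component-WiseP}. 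Also drop the stray reference to $\hat f$ in your opening paragraph --- there is no $\hat f$ in this theorem.

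The paper argues differently. For the forward direction it takes the inverse Fourier transform of $P(-iD)\!\left[\cc\cP_L(\xi)\right]v$, rewriting it as the distribution $P(x)\!\left[\cc\cD_L\delta(x)\right]v$ and identifying this with $\left[\cc\cD_L P(0)\right]v$; for the converse it uses shift-invariance of $\ker P(-iD)$ together with Lemma~\ref{Lemma:P(x+y)} and another inverse Fourier transform to produce $\left[\cc\cP_L(h)\right]\left[\cc\bD_L P(0)\right]v=0$ for all $h$, whence $v\in\ker\cc\bD_L P(0)$. Your route is more elementary: it stays entirely in the polynomial world, avoids the delta-distribution bookkeeping, and packages both directions into a single algebraic identity plus linear independence. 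The paper's route has the virtue of reusing exactly the Fourier/shift machinery from the proof of the preceding theorem, so it fits the surrounding narrative more tightly, but as a standalone argument yours is cleaner.
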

\begin{proof}
Suppose $v\in\ker \cc\bD_L P(0)$ and
consider an expression
$$
  P(-iD)\left[\cc\cP_L(\xi)\right] v,\quad \xi\in\R^d.
$$
Taking the inverse Fourier transform of the previous expression, we obtain the following expression
$P(x)\left[\cc\cD_L\delta(x)\right]v$, which is equivalent to $\left[\cc\cD_L P(0)\right]v$.
Since $\left[\cc\cD_L P(0)\right]$ is the first row of the matrix $\cc\bD_L P(0)$,
we have $\left[\cc\cD_L P(0)\right]v=0$. Thus the algebraic polynomial $\left[\cc\cP_L\right]v$
belongs to $\ker P(-iD)$.

Contrary, suppose that, for a vector $v\in\R^{\cc d(L)}$, the polynomial $\left[\cc\cP_L\right] v$
belongs to $\ker P(-iD)$. Consequently we have
$P(-iD)\left[\cc\cP_L\right]v=0$. Since the differentiation commutes with the translation, it follows
that the previous relation is valid for any shift of the argument. Thus $P(-iD)\left[\cc\cP_L(\cdot+h)\right]v=0$,
$\forall h\in\R^d$.
Taking the inverse Fourier transform of the previous relation and using Lemma~\ref{Lemma:P(x+y)},
we have $P(x)\left[\cc\cP_L(h)\right]\left[\cc\bP_L(-iD)\delta(x)\right]v=0$ or
$$
  \left[\cc\cP_L(h)\right]\left[\cc\bD_L P(0)\right]v=0.
$$
Since the previous relation is valid for an arbitrary $h\in\R^d$, we have $v\in\ker\cc\bD_LP(0)$.
\end{proof}

\begin{remark}
Theorem~\ref{Theorem:PolynomialFromKernel} is valid for any total degree $L\ge0$ of the polynomials; and the theorem
supplies {\em all} the algebraic polynomials up to degree $L$ from the kernel of a given differential operator
with a polynomial symbol.
Note also that the matrix $\cc\bD_L P(0)$ has a non-zero null-space iff the constant term of the
polynomial $P$ (the identity term of the operator $P(-iD)$) vanishes.
\end{remark}

\subsubsection{Affinely invariant case}

First
we present some obvious statement about scale-invariant polynomial spaces.

\begin{statement}\label{State:AffinelyInvariance}
Any polynomial space $\cV$ is scale-invariant iff the space can be decomposed
as
$$
  {\cal V}=\bigoplus\limits_k\left({\cal V}\cap\Pi_k\right).
$$
\end{statement}

Using Statement~\ref{State:AffinelyInvariance}, we can formulate the following theorem.

\begin{theorem}\label{Theorem_Scale-Invariance}
Let a function $f\in S'(\R^d)$ be compactly supported.
Let $L$ be the order of the Strang--Fix conditions.
Let a linear space $V$ be the null-space
of the matrix $\cc\Delta_L\hat f$,
and
let a polynomial space $\te\cV$ be given by~\eqref{cVDefinition}.
Then the polynomial space $\te\cV$ is affinely invariant iff, for any $v\in V$,
we have $\left(v_{\cc d(l-1)+1}, \dots,v_{\cc d(l)}\right)\in \ker\Delta_l\hat f$,
$l=0,1,\dots,L$, (\,$\cc d(-1):=0$).
\end{theorem}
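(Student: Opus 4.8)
The plan is to reduce the claim to a statement about scale-invariance alone and then to verify it block by block. First I would note that, by the main theorem of this section, $\te\cV=\cV_{\rm sh}$ is the largest shift-invariant subspace of $\cV$, hence is itself shift-invariant; therefore $\te\cV$ is affinely invariant if and only if it is scale-invariant, and by Statement~\ref{State:AffinelyInvariance} the latter holds if and only if $\te\cV=\bigoplus_k(\te\cV\cap\Pi_k)$ --- that is, if and only if every homogeneous component of every element of $\te\cV$ again lies in $\te\cV$. So it suffices to decide, for $v\in V$ written in block form $v=(v^0,\dots,v^L)$ with $v^l=(v_{\cc d(l-1)+1},\dots,v_{\cc d(l)})$, when the homogeneous degree-$l$ part of the polynomial $[\cc\cP_L]v$ lies in $\te\cV$.

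The key observation I would isolate is the following faithfulness fact. Since $[\cc\cP_L(x)]$ is the concatenation $\bigl[\,[\cP_0(x)]\ \cdots\ [\cP_L(x)]\,\bigr]$ of rows of monomials grouped by total degree, the homogeneous degree-$l$ component of $[\cc\cP_L]v$ equals $[\cP_l]v^l$. Write $\hat v^{\,l}\in\R^{\cc d(L)}$ for the vector whose $l$th block equals $v^l$ and whose remaining blocks vanish. The block-column form $\cc\bD_L=[\bD_0\ \bD_1\ \cdots\ \bD_L]$ from~\eqref{ExtendedMatrixD} gives $\cc\bD_L\hat f(2\pi n)\,\hat v^{\,l}=\bD_l\hat f(2\pi n)\,v^l$ for every $n\in\Z^d\setminus\{0\}$, so $\hat v^{\,l}\in V=\ker\cc{\bs\Delta}_L\hat f$ exactly when $v^l\in\ker\bs\Delta_l\hat f$ (the matrix of~\eqref{InfiniteRowsMatrix}, denoted $\Delta_l\hat f$ in the statement). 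On the other hand, the monomials of a given total degree being linearly independent, each map $u\mapsto[\cP_l(x)]u$ is injective; hence whenever a homogeneous polynomial of degree $l$ is written as $[\cc\cP_L]w$ with $w=(w^0,\dots,w^L)\in V$, the vanishing of its degree-$m$ parts $[\cP_m]w^m$ for $m\ne l$ forces $w^m=0$ there, so that $w=\hat v^{\,l}$ with $v^l:=w^l$. Combining, $[\cP_l]v^l\in\te\cV$ if and only if $v^l\in\ker\bs\Delta_l\hat f$.

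Both implications then follow. If for every $v\in V$ each $v^l$ lies in $\ker\bs\Delta_l\hat f$, then for an arbitrary $P=[\cc\cP_L]v\in\te\cV$ every homogeneous component $[\cP_l]v^l$ again lies in $\te\cV$ (indeed in $\te\cV\cap\Pi_l$), so $\te\cV=\bigoplus_k(\te\cV\cap\Pi_k)$ is scale-invariant and, being also shift-invariant, affinely invariant. Conversely, if $\te\cV$ is affinely invariant then $\te\cV=\bigoplus_k(\te\cV\cap\Pi_k)$ by Statement~\ref{State:AffinelyInvariance}, so for $v\in V$ each homogeneous component $[\cP_l]v^l$ of $[\cc\cP_L]v$ lies in $\te\cV$, whence $v^l\in\ker\bs\Delta_l\hat f$ by the faithfulness fact. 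The one step that needs genuine care --- and the one I would write in full --- is that faithfulness fact, namely that a homogeneous polynomial admitting some representation $[\cc\cP_L]w$ admits one with $w$ concentrated in a single block; this is precisely where linear independence of the monomials enters, while everything else is bookkeeping with the block-triangular shape of $\cc\bD_L$ recorded in~\eqref{ExtendedMatrixD}.
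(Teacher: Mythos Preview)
Your proof is correct and follows exactly the line the paper has in mind: the paper itself gives no argument beyond ``The proof is trivial,'' and your write-up is the natural unpacking of that remark via Statement~\ref{State:AffinelyInvariance} together with the block-column structure $\cc{\bs D}_L=[\bs D_0\ \cdots\ \bs D_L]$. The only point worth a word of care, as you note, is the ``faithfulness'' step (that a homogeneous polynomial in $\te\cV$ forces its representing vector to be concentrated in a single block), and you handle it correctly using linear independence of the monomials $\cP_l$.
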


The proof is trivial.

\begin{remark}
If the null-space $V:=\ker\cc\Delta_L\hat f$ is not scale-invariant; then
it is always possible to consider an affinely invariant subspace of $V$ and to define
the corresponding affinely invariant polynomial subspace.
Note also that
the order of the Strang--Fix conditions defined by the largest affinely invariant
subspace of $V$ can be less than the order corresponding to the initial space $V$.
\end{remark}

\begin{remark}
In the paper, we consider the polynomial spaces (and the spans that contain the polynomial spaces)
only over $\R$. Nevertheless it is possible to extend our consideration to the field $\C$, but
the corresponding generalization is left to the reader.
\end{remark}


\section{Elliptic scaling functions}\label{Sect:EllipticScalingFunctions}


\subsection{Preliminaries and notations}

Let us recall here some notation and formulas on scaling functions, see, for example,~\cite{Dau,NPS}.

In general, a scaling function $\phi$ satisfies a {\em refinement relation}
\begin{equation}\label{ScalingRelation}
  \phi(x) = \sum_{  k\in{\Z\mathstrut}^d}h_{k}|\det A|^\frac{1}{2}\phi(Ax - k),
     \qquad x\in\R^d,
\end{equation}
where $A$ is a $d\times d$ matrix
and the matrix is called the {\em dilation matrix}.
In the paper,
we shall suppose that the dilation matrices are
real integer matrices whose eigenvalues are greater
than 1 in absolute value.
Then, for any dilation matrix $A$, we have
\begin{equation}\label{DilationMatrixProperty}
  \lim_{j\to\infty}\left|A^{j}x\right|\to\infty,\qquad \forall x\in\R^d,\  x\ne 0.
\end{equation}

Scaling relation~\eqref{ScalingRelation}
can be rewritten in the Fourier domain as
\begin{equation*}
  \hat\phi(\xi) = m_0\left(\left(A^T\right)^{-1}\xi\right)
     \hat\phi\left(\left(A^T\right)^{-1}\xi\right),     \qquad \xi\in\R^d,
\end{equation*}
where $m_0(\xi)$, $\xi\in\R^d$, is a $2\pi$-periodic function, which is called the {\em mask}.
The Fourier transform of the scaling function $\phi$ can be defined by the mask $m_0$ as follows
\begin{equation}\label{HatPhi}
  \hat\phi(\xi)=\prod_{j=1}^\infty
    m_0\left(\left(A^{-T}\right)^{j}\xi\right).
\end{equation}
To simplify the notations,
by $A^{-T}$ we denote the matrix
$(A^T)^{-1}\equiv(A^{-1})^T$.

In this paper, we shall also use the so-called {\em nonstationary
scaling functions}, see~\cite{BerkolaikoNovikov,BVR,NPS,VBU}.
In the nonstationary case, the scaling functions
of different scales are not the scaled versions of a
single function;
and the masks are also different for different scales.
Now the refinement relation (in the Fourier domain) is of the form
\begin{equation*}
  \ssc{n}{\hat\phi}(\xi) = \ssc{n+1}{m_0}\left(A^{-T}\xi\right)
     \ssc{n+1}{\hat\phi}\left(A^{-T}\xi\right),
 \end{equation*}
and formula~\eqref{HatPhi} becomes
\begin{equation*}
  \ssc{n}{\hat\phi}(\xi)=\prod_{j=1}^\infty
    \ssc{n+j}{m_0}\left(\left(A^{-T}\right)^{j}\xi\right),
\end{equation*}
where the superscripts denote the types of the scaling functions (and the corresponding masks).

Below we present some well-known statement concerning the dilation matrices.

\begin{statement}
Let $A$ be a non-singular $d\times d$ matrix with integer entries. Then the number
of the cosets of $\Z^d$ by modulo $A$ is equal to $|\det A|$ and
the set $\Z^d\cap A[0,1)^d$ is a set of representatives of the quotient $\Z^d/A\Z^d$.
\end{statement}

\begin{definition}
Let $\cS(A)\subset[0,1)^d$ be a set of points such that $A\cS(A)\subset\Z^d$, i.\,e.,
$A\cS(A)$ is a set of representatives of the quotient $\Z^d/A\Z^d$.
\end{definition}

Present also an auxiliary but very useful theorem.

\begin{theorem}\label{Theorem_UnionSets}
Let $A$ be a $d\times d$ {\em dilation} matrix with integer entries,
then we have
\begin{align*}
  &\bigcup_{j=1}^\infty\bigcup_{s\in \cS(A)\setminus\{0\}} A^j\set{k+s}{k\in\Z^d} = \Z^d\setminus\{0\},\\[1.5ex]
  &\begin{aligned}
    &A^j\set{k+s}{k\in\Z^d}\\
    &\qquad \cap A^{j'}\set{k+s'}{k\in\Z^d}=\emptyset
   \end{aligned}
    \qquad\mbox{if}\quad\
  \begin{aligned}
    &j,j'\in\N, j\ne j'\mbox{ or }\\ &s,s'\in\cS(A)\setminus\{0\},s\ne s'.
  \end{aligned}
\end{align*}
\end{theorem}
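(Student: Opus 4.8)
The plan is to decompose $\Z^d\setminus\{0\}$ by iterating the dilation matrix $A$ and tracking cosets modulo $A$. The key observation is that every nonzero integer vector, after stripping off as many factors of $A$ as possible, lands in a nonzero coset of $\Z^d/A\Z^d$. First I would set up the basic step: given $m\in\Z^d$, either $m\in A\Z^d$ or $m\notin A\Z^d$. In the latter case, by the Statement on cosets together with the definition of $\cS(A)$, there is a unique $s\in\cS(A)\setminus\{0\}$ and a unique $k\in\Z^d$ with $m=A(k+s)$; so $m\in A\set{k+s}{k\in\Z^d}$ with $j=1$. In the former case $m=Am'$ for some $m'\in\Z^d$, and since $A$ is nonsingular $m'$ is uniquely determined and $m'\ne0$ (as $m\ne0$); we then repeat the argument on $m'$.

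For the first identity I would argue termination and exhaustiveness. The inclusion ``$\subseteq$'' of the right-hand side is immediate since $A$ maps integer vectors to integer vectors and $s\ne0$ forces $A(k+s)\notin A\Z^d$ would not by itself give nonzero, but $k+s\ne0$ because $s\in(0,1)^d$ is not an integer vector, hence $A(k+s)\ne0$ as $A$ is nonsingular — so each set on the right consists of nonzero integer vectors. For ``$\supseteq$'': given $m\in\Z^d\setminus\{0\}$, iterate the basic step. Writing $m=A^{i}m_i$ with $m_i\in\Z^d\setminus\{0\}$ as long as $m_i\in A\Z^d$, the sequence must terminate: each application of $A^{-1}$ does not decrease, and in fact strictly increases unless — here I invoke property~\eqref{DilationMatrixProperty}, equivalently that the eigenvalues of $A$ exceed $1$ in modulus, so $|A^{-1}v|\le c|v|$ with $c<1$ eventually, forcing $|m_i|\to0$, impossible for a sequence of nonzero integer vectors unless the process stops. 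So there is a first index $j\ge1$ with $m_{j}\notin A\Z^d$ (taking $m_0:=m$; if $m\notin A\Z^d$ already then $j=1$ after one application, or one reindexes so that $m=A^{j}m_j$ with $m_j$ the first one not in $A\Z^d$ — a bookkeeping point to state carefully). Then $m_j=k+s$ for a unique $s\in\cS(A)\setminus\{0\}$ and $k\in\Z^d$, giving $m=A^{j}(k+s)$ as required.

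For the disjointness identity, suppose $m=A^{j}(k+s)=A^{j'}(k'+s')$ with $j\le j'$ without loss of generality. Applying $A^{-j}$ gives $k+s=A^{j'-j}(k'+s')$. If $j=j'$ this reads $k+s=k'+s'$, and reducing modulo $\Z^d$ forces $s\equiv s'$; since both lie in $[0,1)^d$ this means $s=s'$ (and then $k=k'$), so the only overlap in the ``same $j$'' case is the trivial one $s=s'$, which the statement excludes. If $j<j'$, then the right-hand side $A^{j'-j}(k'+s')$ lies in $A\Z^d$ (because $A(k'+s')=A k'+As'\in\Z^d$ as $As'\in\Z^d$, and $j'-j\ge1$), whereas $k+s\notin A\Z^d$ since its reduction modulo $A\Z^d$ is the nonzero coset represented by $s$ — contradiction. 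Hence distinct pairs $(j,s)$ yield disjoint sets.

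I do not expect a serious obstacle here; the content is essentially the uniqueness of the ``$A$-adic'' normal form of an integer vector. The one place to be careful is the termination argument for the first identity — one must genuinely use that the eigenvalues of $A$ exceed $1$ in absolute value (via~\eqref{DilationMatrixProperty}) to rule out an infinite descent $m, A^{-1}m, A^{-2}m,\dots$ all remaining in $\Z^d\setminus\{0\}$; equivalently, one can note that $|\det A|>1$ (or $\ge1$ with the eigenvalue condition) bounds how deep the iteration can go before leaving $A\Z^d$, but the clean statement is the spectral/growth one already recorded in the excerpt.
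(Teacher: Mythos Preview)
Your approach is sound and differs from the paper's: you argue element-by-element via an ``$A$-adic'' normal form (strip factors of $A$ from $m$ until you hit a nonzero coset), whereas the paper runs an induction on the level $J$, proving at each stage that $\bigl(\bigcup_{j=1}^{J}\bigcup_{s\neq0}A^{j}\Xi_s\bigr)\cup A^{J}\Z^d=\Z^d$ as a disjoint union, and then invokes~\eqref{DilationMatrixProperty} to push $A^{J}\Z^d$ outside any fixed ball as $J\to\infty$. Both routes use the dilation hypothesis in the same essential way (to rule out infinite descent); yours is more direct, the paper's more structural.

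There is, however, a slip in your disjointness step. After applying $A^{-j}$ you obtain $k+s=A^{\,j'-j}(k'+s')$ with $j'-j\ge1$, and you then claim the right-hand side lies in $A\Z^d$ while $k+s\notin A\Z^d$ ``since its reduction modulo $A\Z^d$ is the nonzero coset represented by $s$''. But $k+s$ is not in $\Z^d$ at all (the components of $s$ lie in $[0,1)$ with at least one strictly positive), so speaking of its class modulo $A\Z^d$ is not meaningful; and for $j'-j=1$ the right-hand side $A(k'+s')$ is in $\Z^d$ but \emph{not} in $A\Z^d$. The clean fix is simply to observe that $k+s\notin\Z^d$ while $A^{\,j'-j}(k'+s')\in\Z^d$ for every $j'-j\ge1$ (since $A(k'+s')=Ak'+As'\in\Z^d$ and $A$ preserves $\Z^d$), which already gives the contradiction. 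Equivalently, apply only $A^{-(j-1)}$ so that both sides land in $\Z^d$, and then compare cosets modulo $A\Z^d$ as you intended.
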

\begin{proof}
Denote $\set{k+s}{k\in\Z^d}$, where $s\in\cS(A)$, by $\Xi_s$.
Note that $\Xi_{(0,\dots,0)}=\Z^d$.
For all $J\in\N$, we shall prove the following relations
\begin{equation}\label{ToBeProved}
  \begin{aligned}
    &\left(\bigcup_{j=1}^{J}\bigcup_{s\in\cS(A)\setminus\{0\}}A^{j}\Xi_s\right)\cup
       A^{J}\Z^d = \Z^d,\\[0.7ex]
    &\begin{aligned}
       &A^j\Xi_s \cap A^{j'}\Xi_{s'}=\emptyset\quad\mbox{if }\\
       &j,j'=1,\dots,J,\ j\ne j'\\
       &\mbox{or }\ s,s'\in\cS(A)\setminus\{0\},\ s\ne s'
    \end{aligned}\quad\ \mbox{and}\quad
    \left(\bigcup_{j=1}^{J}\bigcup_{s\in\cS(A)\setminus\{0\}}A^{j}\Xi_s\right)\cap
      A^{J}\Z^d=\emptyset.
  \end{aligned}
\end{equation}
The proof is by induction over $J$.
For $J=1$, since $\set{A\Xi_s}{s\in\cS(A)}$ are disjoint cosets of $\Z^d$, we have
$$
  \left(\bigcup_{s\in\cS(A)\setminus\{0\}}A\Xi_s\right)\cup A\Z^d=\Z^d,
  \qquad A\Xi_s\cap A\Xi_{s'} =\emptyset\ \mbox{ if }\  s,s'\in\cS(A), s\ne s'.
$$
By the inductive assumption, we have the validity of relations~\eqref{ToBeProved}.
Since $\set{A^{J+1}\Xi_s}{s\in\cS(A)}$ are disjoint cosets of $A^J\Z^d$;
it follows that
$$
  A^J\Z^d=\left(\bigcup_{s\in\cS(A)\setminus\{0\}}A^{J+1}\Xi_s\right)\cup
    A^{J+1}\Z^d,\qquad\ \
   \begin{aligned}
     &A^{J+1}\Xi_s\cap A^{J+1}\Xi_{s'} =\emptyset\\
     &\mbox{if }\  s,s'\in\cS(A), s\ne s'.
   \end{aligned}
$$
Thus we obtain the relations
$$
  \begin{aligned}
    &\left(\bigcup_{j=1}^{J+1}\bigcup_{s\in\cS(A)\setminus\{0\}}A^{j}\Xi_s\right)\cup
       A^{J+1}\Z^d = \Z^d,\\[0.7ex]
    &\begin{aligned}
       &A^j\Xi_s \cap A^{j'}\Xi_{s'}=\emptyset\quad\mbox{if }\\
       &j,j'=1,\dots,J+1,\ j\ne j'\\
       &\mbox{or }\ s,s'\in\cS(A)\setminus\{0\},\ s\ne s'
    \end{aligned}\quad\ \mbox{and}\quad
    \left(\bigcup_{j=1}^{J+1}\bigcup_{s\in\cS(A)\setminus\{0\}}A^{j}\Xi_s\right)\cap
      A^{J+1}\Z^d=\emptyset.
  \end{aligned}
$$
By induction, expressions~\eqref{ToBeProved} are valid for all $J\in\N$.

Now we must consider the limit of $A^j\Z^d$ as $j\rightarrow\infty$.
Denote by $C_r$ the circle of radius $r$ with the center at the origin; then,
using~\eqref{DilationMatrixProperty}, we see that, for
an arbitrary large $r\in\R$,
there exists a number $J\in\N$ such that for all $j'>J$ we have
$\left|A^{j'}k\right|>r$, $k\in\Z^d\setminus\{0\}$, and consequently
$C_r\cap\left(\bigcup_{s\in\cS(A)\setminus\{0\}}A^{j'}\Xi_s\right)=\emptyset$. Thus
$C_r\cap\Z^d\setminus\{0\}\subset\left(\bigcup_{j=1}^J\bigcup_{s\in\cS(A)\setminus\{0\}}A^j\Xi_s\right)$.
Tending the radius $r$ to the infinity, we
extend the relation was to be proved to all $\Z^d\setminus\{0\}$.
\end{proof}

\subsection{Isotropic matrices decomposition}

Let us recall the definition of an isotropic matrix.
\begin{definition}
Any square matrix is called {\em isotropic} if
the matrix is diagonalizable over $\C$ and
all its eigenvalues are equal in absolute value,
see, for example,~\cite{R.-Q.Jia}.
\end{definition}

\begin{theorem}\label{Theorem_MyDecomposition}
Let $\te A$ be a $d\times d$ real isotropic matrix and
let $|\det\te A|=1$,
then
\begin{equation}\label{MyDecomposition}
  \te A=QUQ^{-1},
\end{equation}
where $U$ is an orthogonal matrix and $Q$ is a positive definite symmetric matrix.
\end{theorem}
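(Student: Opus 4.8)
The plan is to diagonalize $\te A$ over $\C$, observe that the eigenvalues lie on the unit circle, and exploit the fact that $\te A$ is real to pair up complex-conjugate eigenspaces. Concretely, since $\te A$ is isotropic with $|\det\te A|=1$, write $\te A = S\Lambda S^{-1}$ with $\Lambda$ diagonal and $|\lambda_j|=1$ for every eigenvalue $\lambda_j$. Because $\te A$ has real entries, the non-real eigenvalues come in conjugate pairs and the corresponding eigenvectors can be chosen conjugate; grouping these pairs, one obtains a \emph{real} invertible matrix $T$ such that $T^{-1}\te A\, T = B$ is block-diagonal, with $1\times1$ blocks $\pm1$ coming from real eigenvalues and $2\times2$ blocks of the form $\rho\begin{bmatrix}\cos\theta & -\sin\theta\\ \sin\theta & \cos\theta\end{bmatrix}$ coming from each conjugate pair $\rho e^{\pm i\theta}$. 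Isotropy forces $\rho=1$ in every block, so $B$ is already orthogonal — but $T$ itself need not be, so this is not yet the desired decomposition.

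The key step is therefore a symmetrization. The standard device: since $B = T^{-1}\te A\,T$ is orthogonal, the matrix $\te A$ is similar to an orthogonal matrix, and we want to replace $T$ by a \emph{positive definite symmetric} $Q$. Consider the positive definite matrix $G := T^{-T}T^{-1}$ (i.e. $G = (TT^T)^{-1}$), which is the Gram matrix of the inner product in which $\te A$ acts orthogonally: indeed from $B^TB = I$ and $B = T^{-1}\te A T$ one derives $\te A^{T} G\, \te A = G$. Let $Q := G^{-1/2}$, the unique positive definite symmetric square root of $G^{-1}$. Then $U := Q^{-1}\te A\,Q$ satisfies $U^{T}U = Q\,\te A^{T}Q^{-1}\cdot Q^{-1}\te A\,Q = Q\,\te A^{T}G\,\te A\,Q = Q\,G\,Q = I$, using $Q^{2}=G^{-1}$ and $QGQ = G^{-1/2}GG^{-1/2}=I$. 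Hence $U$ is orthogonal, $Q$ is positive definite symmetric, and $\te A = QUQ^{-1}$, which is exactly \eqref{MyDecomposition}.

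I expect the main obstacle to be the careful justification that one may pass from the complex diagonalization to a \emph{real} block form with genuinely orthogonal blocks — i.e. that isotropy really does force every $2\times2$ rotation-scaling block to have scaling factor $1$ and every real eigenvalue to be $\pm1$. This requires knowing that the $\C$-eigenvalues of the real matrix $\te A$, which all have modulus $1$ by isotropy, split into real ones ($\pm1$) and conjugate pairs $e^{\pm i\theta}$, and that the real canonical (block-diagonal) form built from a conjugate pair of eigenvectors $v, \bar v$ using the real and imaginary parts $\mathrm{Re}\,v,\ \mathrm{Im}\,v$ as a basis produces precisely the rotation block $\begin{bmatrix}\cos\theta & -\sin\theta\\ \sin\theta & \cos\theta\end{bmatrix}$. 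Once that real reduction is in hand, the symmetrization via $Q = (TT^{T})^{-1/2}$ is routine linear algebra. An alternative, slightly slicker route that sidesteps the explicit real block form: define $G := \sum_{j=0}^{N-1} (\te A^{T})^{j}\te A^{j}$ averaged suitably, or better, use that $\te A$ being isotropic means the one-parameter semigroup $\{\te A^{n}\}$ is bounded; then $G := \sum_{n\in\Z}(\te A^{T})^{n}\te A^{n}$ does not converge, but boundedness of $\{\te A^{n}\}_{n\in\Z}$ (which follows from isotropy since all eigenvalues have modulus exactly $1$ and $\te A$ is diagonalizable) lets one build an invariant inner product by a standard Bochner/averaging or weak-$*$ limit argument, yielding the same $G$ with $\te A^{T}G\te A = G$, and then $Q := G^{-1/2}$ finishes as above. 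Either way, the crux is producing the $\te A$-invariant positive definite quadratic form, and isotropy (diagonalizability plus unimodular eigenvalues) is exactly what makes that possible.
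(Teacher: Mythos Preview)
Your argument is correct. Both your proof and the paper's rest on the same underlying fact --- that an isotropic matrix with $|\det\te A|=1$ has all eigenvalues on the unit circle, so the diagonalizing transformation can be adjusted to make the conjugating matrix positive definite symmetric --- but the execution differs. The paper diagonalizes $\te A=T\Lambda T^{-1}$ over $\C$, applies the polar decomposition $T=QF$ with $Q^2=TT^*$, and then argues directly from the conjugate-pair structure of the eigenvectors that $\overline{Q^2}=Q^2$, so $Q$ is real; orthogonality of $U:=F\Lambda F^{-1}$ then follows from $|\lambda_j|=1$. You instead pass first to the real block-diagonal form $T^{-1}\te A\,T=B$ with $B$ orthogonal, which makes everything real from the outset, and then produce $Q$ as the inverse square root of the $\te A$-invariant Gram matrix $G=(TT^T)^{-1}$. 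The two $Q$'s coincide (both equal $(TT^{T})^{1/2}$ once $T$ is chosen real), so the routes merge at the end. Your packaging via an invariant inner product is more conceptual and sidesteps the paper's explicit reality check for $Q$, at the cost of needing the real canonical form up front; the paper's polar-decomposition route is more hands-on but avoids that reduction. Either way the substance is the same.
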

\begin{proof}
Since the matrix $\te A$ is isotropic,
the matrix is diagonalizable:
$$
  \te A =T{\Lambda}T^{-1},
$$
where $\Lambda$ is a diagonal matrix.
Using the polar decomposition, we can always present $T$ as follows
\begin{equation}\label{PolarDecomposition}
  T=QF,
\end{equation}
where $F$ is a unitary matrix and $Q$ is a positive definite Hermitian matrix:
$Q^2=TT^*$ (where the matrix $T^*$ is the Hermitian conjugate of the matrix $T$).

Now we shall prove that $Q$ is a real matrix.
Since the matrix $\te A$ is real; it follows that if the dimension $d$ is even,
then the eigenvalues of $\te A$ are encountered
as pairs $\{\lambda_j,\cc\lambda_j\}$
(here and in the sequel, the overline stands for the complex conjugation),
$j=1,\dots,d/2$ (taking into account the multiplicities of the eigenvalues); else, if $d$ is odd,
in addition to the pairs of eigenvalues, there is an eigenvalue $\pm1$.
Note also that, among the eigenvalues $\lambda_j$, $\cc \lambda_j$, the real eigenvalues $\pm 1$ can be.
Since the eigenvectors corresponding to the real eigenvalues are real, these
eigenvectors do not influence on the matrix to be complex.
Thus without loss of generality we shall consider the case of an even $d$ only
and suppose that there are not real eigenvalues.

Let $\ind{j}{x}:=\left(\ind{j}{x_1},\ind{j}{x_2},\dots,\ind{j}{x_d}\right)$ be an eigenvector of $\te A$
corresponding to an eigenvalue $\lambda_j$, $j=1,\dots,d/2$;
then without loss of generality the matrix $T$ can be of the form
$$
  T:=
    \begin{bmatrix}
      \ind{1}{x_1} & \cc{\ind{1}{x_1}} & \ind{2}{x_1} & \cc{\ind{2}{x_1}}
                                   &\ldots& \ind{d/2}{x_1} & \cc{\ind{d/2}{x_1}}\\
      \ind{1}{x_2} & \cc{\ind{1}{x_2}} & \ind{2}{x_2} & \cc{\ind{2}{x_2}}
                                   &\ldots& \ind{d/2}{x_2} & \cc{\ind{d/2}{x_2}}\\
      \vdots       &  \vdots           &\vdots         &\vdots
                                   &\ddots&\vdots      &\vdots\\
      \ind{1}{x_d} & \cc{\ind{1}{x_d}} &\ind{2}{x_d} & \cc{\ind{2}{x_d}}
                                   &\ldots& \ind{d/2}{x_d} & \cc{\ind{d/2}{x_d}}\\
    \end{bmatrix}.
$$
Consequently the matrix $\cc T$ can be presented as follows
\begin{equation}\label{TComplexConjugated}
  \cc T=TC,\qquad
   C:=
   \begin{bmatrix}
     c & 0 & \ldots & 0  \\
     0 & c & \ldots & 0  \\
     \vdots&\vdots&\ddots&\vdots\\
     0 & 0 & \ldots  & c \\
       \end{bmatrix},
\end{equation}
where
$c:=\begin{bmatrix}
  0 & 1 \\
  1 & 0 \\
\end{bmatrix}$
is a permutation matrix and the `0' symbol must be interpreted as the $2\times2$ zero matrix.
Using~\eqref{TComplexConjugated}, and since $c^2$
is the $2\times2$ identity matrix,
we have $\cc{Q^2} = \cc{T}\left(\cc{T}\right)^* = TC^2T^* = TT^* = Q^2$.
Consequently $Q^2$ is a {\em real} matrix,
hence the ``square root'' $Q$ is also a real matrix.

Using~\eqref{PolarDecomposition}, the matrix $\te A$ can be written as follows
$$
  \te A = QF{\Lambda}F^{-1}Q^{-1}=QUQ^{-1},
$$
where $U:=F{\Lambda}F^{-1}=F{\Lambda}F^*$.
The matrix $U$ is a unitary matrix. Indeed,
\begin{multline*}
  UU^* = F{\Lambda}F^*F{\Lambda}^*F^*=F
   \begin{bmatrix}
     |\lambda_1|^2 & 0 & \ldots & 0 & 0 \\
     0 & |\cc{\lambda_1}|^2 & \ldots & 0 & 0 \\
     \vdots&\vdots&\ddots&\vdots&\vdots\\
     0 & 0 & \ldots & |\lambda_{d/2}|^2 & 0 \\
     0 & 0 & \ldots & 0 & |\cc{\lambda_{d/2}}|^2 \\
       \end{bmatrix}
   F^*\\ = FIF^* = I,
\end{multline*}
where $I$ is the identity matrix.

Finally since the matrices $\te A$ and $Q$ in decomposition~\eqref{MyDecomposition}
are real, the matrix $U$ must be real (consequently orthogonal) also.
\end{proof}

The following corollary of Theorem~\ref{Theorem_MyDecomposition}
will play an important role in the sequel.

\begin{corollary}\label{CorollaryFromMyDecomposition}
Under the conditions of Theorem~\ref{Theorem_MyDecomposition}, we have
\begin{align}
    \label{Q-2Invariance}
    &\te AQ^{2}\te A^T=\te A^{-1}Q^{2}\te A^{-T}=Q^{2},\\
    \label{Q2Invariance}
    &\te A^TQ^{-2}\te A=\te A^{-T}Q^{-2}\te A^{-1}=Q^{-2}.
\end{align}
\end{corollary}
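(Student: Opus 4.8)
The plan is to reduce everything to the decomposition \eqref{MyDecomposition} supplied by Theorem~\ref{Theorem_MyDecomposition} and then to compute directly, exploiting two facts: $Q$ is symmetric, so $Q^T=Q$ and $Q^{-T}=Q^{-1}$; and $U$ is orthogonal, so $U^TU=UU^T=I$. First I would record the transposes and inverses of $\te A$ that we will need. From $\te A=QUQ^{-1}$ we get $\te A^T=Q^{-1}U^TQ$, $\te A^{-1}=QU^TQ^{-1}$ (using $U^{-1}=U^T$), and $\te A^{-T}=Q^{-1}UQ$. Each of these is just the transpose or inverse of a product, together with $Q^T=Q$.

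Next I would verify \eqref{Q-2Invariance} by substitution. For the first equality,
$$
  \te A Q^{2}\te A^{T}
  = \bigl(QUQ^{-1}\bigr)Q^{2}\bigl(Q^{-1}U^{T}Q\bigr)
  = QU\bigl(Q^{-1}Q^{2}Q^{-1}\bigr)U^{T}Q
  = QUU^{T}Q = Q^{2},
$$
the middle bracket collapsing to $I$ and $UU^T=I$. For the second equality of \eqref{Q-2Invariance},
$$
  \te A^{-1}Q^{2}\te A^{-T}
  = \bigl(QU^{T}Q^{-1}\bigr)Q^{2}\bigl(Q^{-1}UQ\bigr)
  = QU^{T}U Q = Q^{2}
$$
in the same way. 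Then \eqref{Q2Invariance} can be obtained with no further work by taking inverses of the two identities just proved: from $\te A Q^{2}\te A^{T}=Q^{2}$ one gets $\te A^{-T}Q^{-2}\te A^{-1}=Q^{-2}$, and from $\te A^{-1}Q^{2}\te A^{-T}=Q^{2}$ one gets $\te A^{T}Q^{-2}\te A=Q^{-2}$; alternatively, one may simply repeat the same substitution with $Q^{-2}$ in place of $Q^{2}$, since the cancellation $Q^{-1}Q^{-2}Q^{-1}=Q^{-4}$ does not occur — rather the exponents that survive are $QU(\cdot)U^TQ$-type only when the outer $Q$'s are the ones adjacent to $U$, so for $Q^{-2}$ one uses instead the forms $\te A^T=Q^{-1}U^TQ$ and $\te A=QUQ^{-1}$, whose inner $Q$'s meet $Q^{-2}$ and cancel to $U^TU=I$, leaving $Q^{-1}Q^{-1}=Q^{-2}$.

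There is essentially no obstacle here: the statement is a formal consequence of \eqref{MyDecomposition}, the symmetry of $Q$, and the orthogonality of $U$. The only thing to be careful about is bookkeeping of which $Q$-powers are adjacent to which $U$ (or $U^T$) so that the orthogonality relation is the one that gets used; writing out the four products explicitly, as above, makes this transparent. I would present the proof in exactly this order: transposes/inverses of $\te A$, then the two identities of \eqref{Q-2Invariance} by direct multiplication, then \eqref{Q2Invariance} by inverting (or by the analogous direct computation).
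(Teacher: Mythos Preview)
Your proposal is correct and is exactly the straightforward verification the paper has in mind; the paper itself gives no details beyond ``The proof is straightforward.'' Your direct substitution of $\te A=QUQ^{-1}$, $\te A^T=Q^{-1}U^TQ$, $\te A^{-1}=QU^TQ^{-1}$, $\te A^{-T}=Q^{-1}UQ$ and cancellation via $UU^T=U^TU=I$ is precisely that, and obtaining \eqref{Q2Invariance} by inverting \eqref{Q-2Invariance} is clean (the ``alternatively'' aside is unnecessary and a bit garbled---I would drop it).
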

The proof is straightforward.

\subsection{Quadratic form definition}

Consider a real dilation matrix $A$. Suppose that $A$ is isotropic;
then, using Theorem~\ref{Theorem_MyDecomposition},
$A^{-T}$ can be factored as follows
\begin{equation}\label{MatrixBDecomposition}
    A^{-T}= \frac{1}{q^{1/d}} Q^{-1}UQ,
\end{equation}
where $q:=|\det A|$, $U$ is an orthogonal matrix,
and $Q$ is a symmetric positive definite matrix.

Now we can define a quadratic form
\begin{equation}\label{QuadraticForm}
  W(x):={x}^TQ^{2}x,\quad x\in\R^d.
\end{equation}
Since $Q^{2}$
is positive definite; therefore,
quadratic form~\eqref{QuadraticForm}
is also positive definite.
By Corollary~\ref{CorollaryFromMyDecomposition}, we see that the quadratic
form $W(x)$
is invariant (up to a constant factor)
under the variable transformation
by the matrix $A^{-T}$: $x\mapsto x':= A^{-T}x$.
Indeed, using~\eqref{Q-2Invariance}, we have
\begin{equation}\label{QuadrFormInvariance}
  W(x')=W(A^{-T}x)=x^T A^{-1} Q^{2}A^{-T}x = \frac{1}{q^{2/d}}x^TQ^{2}x
  = \frac{1}{q^{2/d}}W(x).
\end{equation}
(Similarly, the quadratic form ${x}^TQ^{-2}x$ will be invariant under the transformation
by the matrix $A$, see~\eqref{Q2Invariance}.)

\begin{remark}
The matrix $Q$ in formulas~\eqref{MyDecomposition},~\eqref{MatrixBDecomposition}
(consequently, quadratic form~\eqref{QuadraticForm}) is
defined within a constant factor.
\end{remark}

\subsection{Construction of the mask}

Let $A$ be an isotropic dilation matrix and let $A^{-T}$ be factored by formula~
\eqref{MatrixBDecomposition}.
Let ${G}(\xi)$ be a trigonometric polynomial such that its Taylor series about zero begins
with quadratic form~\eqref{QuadraticForm},
i.\,e.,
\begin{equation}\label{m_1SimP}
  {G}(\xi) := W(\xi)+\mbox{ higher order terms}, \qquad \xi\in\R^d.
\end{equation}

Define a mask $m_0$ as follows
\begin{equation}\label{m_0Definition}
  m_0(\xi):=\dfrac{\prod\limits_{s\in{\cS}(A^T)\setminus\{0\}}{G}(\xi+2\pi s)}
     {\prod\limits_{s\in{\cS}(A^T)\setminus\{0\}}{G}(2\pi s)}.
\end{equation}
In formula~\eqref{m_0Definition}, we suppose that
\begin{equation}\label{NonZeroConditionsOnG}
  G(2\pi s)\ne0,\quad\forall s\in\cS(A^T)\setminus\{0\}.
\end{equation}

\begin{definition}
The scaling function corresponding to an isotropic dilation matrix $A$ and mask~\eqref{m_0Definition},
where $G$ is given by~\eqref{m_1SimP} and the quadratic form $W$ is given by~\eqref{QuadraticForm},
is called the {\em elliptic scaling function} (see~\cite{ZakhConstrApprox}).
\end{definition}

Let the matrix $Q^2$ be presented in  component-wise form as follows
\begin{equation*}
  Q^2 := \left[q_{ij}\right]_{i,j=1,\dots,d},\quad q_{ij}\in\R^d,\ \ q_{ij}=q_{ji}.
\end{equation*}
Then quadratic form~\eqref{QuadraticForm} is
\begin{equation}\label{AnotherQuadraticForm}
  W(\xi)
      :=\sum_{1\le i\le d} q_{ii}\xi_i^2+2\sum_{\begin{subarray}{c}
          1\le i,j\le d\\[0.4ex]i<j \end{subarray}} q_{ij}\xi_i\xi_j,\qquad \xi:=\left(\xi_1,\dots,
            \xi_d\right)\in\R^d.
\end{equation}
The following trigonometric polynomial
has the required Taylor expansion about zero, see~\eqref{m_1SimP},
\begin{equation}\label{m_1}
  {G}(\xi_1,\dots,\xi_d) := 4\sum_{1\le i\le d} q_{ii}\sin^2\frac{\xi_i}2
    +2\sum_{\begin{subarray}{c}
          1\le i,j\le d\\[0.4ex]i<j \end{subarray}}q_{ij}\sin\xi_i\sin\xi_j.
\end{equation}
Thus, using~\eqref{m_1}, the mask $m_0$ defined by~\eqref{m_0Definition} is a trigonometric polynomial.

In the next lemma, we investigate zeros of trigonometric polynomial~\eqref{m_1}.
\begin{lemma}
  For any quadratic positive definite form~\eqref{AnotherQuadraticForm}, trigonometric
polynomial~\eqref{m_1}
is not negative on $\R^d$ and vanishes only at the points $2\pi k$, $k\in\Z^d$.
\end{lemma}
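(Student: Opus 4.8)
The plan is to show two things: that $G$ is nonnegative everywhere, and that its only zeros are at $2\pi\Z^d$. Both hinge on rewriting $G$ so that the positive definiteness of $Q^2$ can be exploited directly. The natural move is to introduce the substitution $u_i := \sin\xi_i$ and, crucially, to recognize the pure-sine part of $G$ as a quadratic form in the vector $u = (\sin\xi_1,\dots,\sin\xi_d)$. Indeed, using the identity $4\sin^2(\xi_i/2) = 2(1-\cos\xi_i)$, the polynomial \eqref{m_1} can be split as
\begin{equation*}
  G(\xi) = u^T Q^2 u \;+\; 2\sum_{1\le i\le d} q_{ii}\bigl(1-\cos\xi_i - \sin^2\xi_i\bigr),
\end{equation*}
where $1-\cos\xi_i-\sin^2\xi_i = 1-\cos\xi_i-(1-\cos^2\xi_i) = \cos\xi_i(\cos\xi_i-1) = -\cos\xi_i\,(1-\cos\xi_i)$. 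Hmm, this is not obviously signed; so I would instead aim for the cleaner identity $4\sin^2(\xi_i/2) = (2\sin(\xi_i/2))^2$ and pair each $\sin\xi_i = 2\sin(\xi_i/2)\cos(\xi_i/2)$, which suggests the better substitution $v_i := 2\sin(\xi_i/2)$, $w_i := \cos(\xi_i/2)$, giving $\sin\xi_i = v_i w_i$ and $4\sin^2(\xi_i/2) = v_i^2$.

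With $v_i := 2\sin(\xi_i/2)$ and $w_i := \cos(\xi_i/2)$, trigonometric polynomial \eqref{m_1} becomes
\begin{equation*}
  G(\xi) = \sum_{1\le i\le d} q_{ii} v_i^2 + 2\sum_{i<j} q_{ij}\,(v_i w_i)(v_j w_j).
\end{equation*}
The first step is to bound this below. I would write $G(\xi) = \sum_i q_{ii}v_i^2(1-w_i^2) + \bigl[\sum_i q_{ii}(v_iw_i)^2 + 2\sum_{i<j} q_{ij}(v_iw_i)(v_jw_j)\bigr]$. The bracketed term is exactly $z^T Q^2 z$ with $z_i := v_i w_i = \sin\xi_i$, hence $\ge 0$ by positive definiteness of $Q^2$; and $1 - w_i^2 = 1-\cos^2(\xi_i/2) = \sin^2(\xi_i/2) \ge 0$, while $q_{ii} > 0$ (diagonal entries of a positive definite matrix). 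So $G(\xi)\ge 0$ everywhere, which settles nonnegativity.

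For the zero set, suppose $G(\xi) = 0$. Then both nonnegative pieces vanish: $\sum_i q_{ii}\sin^2(\xi_i/2)\sin^2(\xi_i/2)\cdot 4$... more precisely the term $\sum_i q_{ii} v_i^2(1-w_i^2) = 4\sum_i q_{ii}\sin^2(\xi_i/2)\sin^2(\xi_i/2)$ — wait, $v_i^2(1-w_i^2) = 4\sin^2(\xi_i/2)\sin^2(\xi_i/2) = 4\sin^4(\xi_i/2)$; since each $q_{ii}>0$, this forces $\sin(\xi_i/2) = 0$ for every $i$, i.e.\ $\xi_i \in 2\pi\Z$ for all $i$, i.e.\ $\xi \in 2\pi\Z^d$. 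Conversely at such points every $\sin(\xi_i/2)=0$ so $v_i=0$ and $G=0$. This completes the argument. The main obstacle is purely bookkeeping: choosing the substitution that makes the pure-sine part literally equal to $z^T Q^2 z$ so that positive definiteness applies verbatim, and isolating a manifestly nonnegative remainder whose vanishing already pins $\xi$ to $2\pi\Z^d$; once $v_i := 2\sin(\xi_i/2)$, $w_i := \cos(\xi_i/2)$ are in hand the rest is routine. One should double-check the decomposition $v_i^2 = (v_iw_i)^2 + v_i^2(1-w_i^2)$ termwise and confirm it threads through the off-diagonal sum correctly, which it does since the off-diagonal terms already involve only the products $v_iw_i = \sin\xi_i$.
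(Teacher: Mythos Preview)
Your proof is correct and follows essentially the same route as the paper: introduce the half-angle variables $s_i=\sin(\xi_i/2)$, $c_i=\cos(\xi_i/2)$, then exploit the positive definiteness of $Q^2$ together with $q_{ii}>0$ and $1-c_i^2=s_i^2\ge 0$. Your explicit decomposition $G(\xi)=4\sum_i q_{ii}\sin^4(\xi_i/2)+W(\sin\xi_1,\dots,\sin\xi_d)$ in fact spells out rigorously the step that the paper compresses into the single sentence ``Since $0\le\cos\xi_j/2\le1$ \dots\ it follows that $G(\xi)\ge0$''; the paper evaluates $W$ at the half-angle sines rather than the full-angle sines, but the underlying idea and the zero-set argument are identical.
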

\begin{proof}

Rewrite formula~\eqref{m_1} as follows
$$
  {G}(\xi_1,\dots,\xi_d) = 4\sum_{1\le i\le d} q_{ii}\sin^2\frac{\xi_i}2
     + 8\sum_{\begin{subarray}{c}
          1\le i,j\le d\\[0.4ex]i<j \end{subarray}}q_{ij}
              \sin\frac{\xi_i}2\sin\frac{\xi_j}2\cos\frac{\xi_i}2\cos\frac{\xi_j}2.
$$
Since the quadratic form $W(\xi)$
is positive definite; we have
\begin{multline*}
  \sum_{1\le i\le d} q_{ii}\sin^2\frac{\xi_i}2
     + 2\sum_{\begin{subarray}{c}
          1\le i,j\le d\\[0.4ex]i<j \end{subarray}}q_{ij}
              \sin\frac{\xi_i}2\sin\frac{\xi_j}2 \\ \equiv
                  W\left(\sin\frac{\xi_1}2,\dots,\sin\frac{\xi_d}2\right)\ge0,\qquad
                     \forall\xi\in\R^d,
\end{multline*}
and the trigonometric polynomial $W\left(\sin\dfrac{\xi_1}2,\dots,\sin\dfrac{\xi_d}2\right)$
vanishes iff $\sin\dfrac{\xi_j}2=0,\ j=1,\dots,d$.
Since $0\le\cos\xi_j/2\le1$, $\xi_j\in[-\pi,\pi]$, $j=1,\dots,d$, and
$G(\xi)$ is $2\pi$-periodic; it follows that ${G}(\xi)\ge0$ for all $\xi\in\R^d$
and ${G}(\xi)$ vanishes only at the points $2\pi k$, $k\in\Z^d$.
\end{proof}

Thus, for trigonometric polynomial~\eqref{m_1},
conditions~\eqref{NonZeroConditionsOnG} are satisfied automatically.

\begin{remark}
Any elliptic scaling function corresponding
to trigonometric polynomial~\eqref{m_1} is {\em compactly supported}, see~\cite{CDM}.
\end{remark}

Here we shall not discus other
properties of the elliptic scaling functions and we refer the reader to~\cite{ZakhConstrApprox}.


\subsection{Higher order scaling functions}

Taking an elliptic scaling function $\phi$ (of the first order) defined above,
the elliptic scaling function of order $m=2,3,\dots$, denoted by $\phi^m$, is given (in the Fourier domain)
as follows
\begin{equation}\label{ScalingFunctionOfArbitraryOrder}
  \hat\phi^m(\xi) := \left(\hat\phi(\xi)\right)^m,
\end{equation}
and the corresponding mask $m_0^m(\xi)$ is given as: $m_0^m(\xi):=\left(m_0(\xi)\right)^m$,
where $m_0$ is the mask corresponding to $\phi$.

\section{Reproducing of polynomials}\label{Sect:ReproductionOfPolynomials}

\subsection{Main theorem}\label{Subsection:MainTheorem}

\begin{lemma}\label{HatPhiZeros}
Let $A$ be an isotropic dilation matrix. Let the matrix $A^{-T}$
be factored by~\eqref{MatrixBDecomposition} and a mask $m_0$ be given by~\eqref{m_0Definition},
where the trigonometric polynomial $G$ is given by~\eqref{m_1}
and the quadratic form $W$
is given by~\eqref{QuadraticForm}.
Suppose that $\phi$ is the (elliptic) scaling function corresponding to the dilation matrix $A$ and the mask $m_0$; then
$\hat\phi(2\pi n)=0$ for all $n\in\Z^d\setminus\{0\}$. Moreover, the Taylor series
of
$\hat\phi$
about the points $2\pi n$, $n\in\Z^d\setminus\{0\}$,  are of the form
\begin{equation*}
  \hat\phi(\xi-2\pi n) \propto W(\xi)
    + R_4(\xi).
\end{equation*}
\end{lemma}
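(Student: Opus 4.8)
The plan is to read off both claims directly from the infinite-product formula~\eqref{HatPhi} for $\hat\phi$ together with the structure of the mask~\eqref{m_0Definition}. First I would prove that $\hat\phi(2\pi n)=0$ for every $n\in\Z^d\setminus\{0\}$. Fix such an $n$. By Theorem~\ref{Theorem_UnionSets} applied to the dilation matrix $A^T$, there is a unique $j\ge1$ and a unique $s\in\cS(A^T)\setminus\{0\}$ with $n\in (A^T)^j\set{k+s}{k\in\Z^d}$; equivalently $(A^{-T})^j n = k+s$ for some $k\in\Z^d$. Now evaluate the $j$th factor in~\eqref{HatPhi}: $m_0\bigl((A^{-T})^j\cdot 2\pi n\bigr) = m_0\bigl(2\pi(k+s)\bigr) = m_0(2\pi s)$ by $2\pi$-periodicity of $m_0$. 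From~\eqref{m_0Definition}, $m_0(2\pi s)$ has numerator $\prod_{s'\in\cS(A^T)\setminus\{0\}} G(2\pi s + 2\pi s')$; the term $s' $ chosen so that $s+s'\equiv 0$ modulo $\Z^d$ (i.e.\ the coset inverse, which lies in $\cS(A^T)$ since $s\ne0$) contributes $G(2\pi\ell)$ for some $\ell\in\Z^d$, which vanishes by the preceding Lemma on zeros of~\eqref{m_1}. Hence $m_0(2\pi s)=0$, so one factor of the product~\eqref{HatPhi} vanishes and $\hat\phi(2\pi n)=0$. (One must check the factor is genuinely present, i.e.\ that the product~\eqref{HatPhi} converges to an entire function and the vanishing of a single factor forces the value to be zero — this follows from compact support of $\phi$, so $\hat\phi$ is entire, and from the local structure of the product near $2\pi n$.)

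Next I would determine the leading term of the Taylor expansion of $\hat\phi$ about $2\pi n$. Write $\xi = \eta + 2\pi n$ with $\eta$ near $0$. In the product~\eqref{HatPhi}, all factors $m_0\bigl((A^{-T})^i\xi\bigr)$ with $i\ne j$ are nonzero at $\eta=0$ (again by Theorem~\ref{Theorem_UnionSets}: the other iterates land at $2\pi$ times points that are \emph{not} in the bad cosets, or at genuine lattice points where one uses $m_0(0)=1$), so near $\eta=0$ the order of vanishing of $\hat\phi(\eta+2\pi n)$ equals the order of vanishing of the single factor $m_0\bigl((A^{-T})^j(\eta+2\pi n)\bigr) = m_0\bigl((A^{-T})^j\eta + 2\pi(k+s)\bigr) = m_0\bigl((A^{-T})^j\eta + 2\pi s\bigr)$ in the variable $\eta$. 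By~\eqref{m_0Definition} this factor is, up to a nonvanishing analytic prefactor, equal to $G\bigl((A^{-T})^j\eta + 2\pi s + 2\pi s_0\bigr)$ where $s_0$ is the coset-inverse of $s$, so the argument is $(A^{-T})^j\eta + 2\pi\ell$ with $\ell\in\Z^d$; by $2\pi$-periodicity of $G$ this is $G\bigl((A^{-T})^j\eta\bigr)$. Since $G(\zeta)=W(\zeta)+O(|\zeta|^4)$ by~\eqref{m_1SimP}, and since $\zeta=(A^{-T})^j\eta$, the leading term is $W\bigl((A^{-T})^j\eta\bigr)$, which by the invariance~\eqref{QuadrFormInvariance} equals $q^{-2j/d}W(\eta)$, a nonzero multiple of $W(\eta)$. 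Collecting the nonzero prefactor from the remaining factors and from~\eqref{m_0Definition} gives $\hat\phi(\eta+2\pi n)\propto W(\eta)+R_4(\eta)$, i.e.\ the claimed form $\hat\phi(\xi-2\pi n)\propto W(\xi)+R_4(\xi)$.

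The main obstacle is the bookkeeping in the product~\eqref{HatPhi}: one must argue carefully that exactly one factor is responsible for the vanishing and that the infinite tail of the product, as well as the finitely many other factors, contribute a nonzero real-analytic function near $\eta=0$. For the tail, since all eigenvalues of $A^{-T}$ are strictly less than $1$ in absolute value, the arguments $(A^{-T})^i\xi$ for large $i$ tend to $0$, where $m_0=1+O(|\cdot|^2)$, so the tail converges uniformly to a nonvanishing analytic function — this is the standard argument underlying~\eqref{HatPhi} and can be cited from~\cite{CDM,ZakhConstrApprox}. For the finitely many middle factors, one uses Theorem~\ref{Theorem_UnionSets} to see that $(A^{-T})^i \cdot 2\pi n$ for $i\ne j$ is $2\pi$ times either a lattice point (giving $m_0=1$) or a point in a \emph{good} coset $2\pi s'$ with $s'$ such that $m_0(2\pi s')\ne 0$; the disjointness part of Theorem~\ref{Theorem_UnionSets} is exactly what guarantees no second bad factor occurs. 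Once these nonvanishing claims are in place, the Taylor-order computation is the routine chain-rule and periodicity manipulation sketched above, and the use of~\eqref{QuadrFormInvariance} to identify the quadratic leading term with a multiple of $W$ is immediate.
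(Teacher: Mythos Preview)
Your proposal is correct and follows essentially the same route as the paper's proof: both invoke Theorem~\ref{Theorem_UnionSets} to see that the zeros of the factors $m_0\bigl((A^{-T})^j\,\cdot\,\bigr)$ partition $2\pi\bigl(\Z^d\setminus\{0\}\bigr)$ without overlap, and then use the invariance~\eqref{QuadrFormInvariance} together with $G(\zeta)=W(\zeta)+O(|\zeta|^4)$ to identify the quadratic leading term at each $2\pi n$. Your write-up is somewhat more explicit than the paper's about why $m_0(2\pi s)=0$ (the coset-inverse factor in~\eqref{m_0Definition}) and about why the remaining factors and the tail of the product contribute a nonvanishing analytic function, but the underlying argument is the same.
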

\begin{proof}
Since the mask $m_0$ is $2\pi$-periodic; therefore, $m_0$ vanishes at the points
$2\pi s+2\pi n$, $s\in\cS(A^T)\setminus\{0\}$, $n\in\Z^d$, and the function $m_0((A^T)^{-j}\cdot)$,
$j=1,2,\dots$, vanishes
at the points $2\pi (A^T)^{j}(s+n)$, $s\in\cS(A^T)\setminus\{0\}$, $n\in\Z^d$.
By Theorem~\ref{Theorem_UnionSets}, we have $\set{2\pi (A^T)^{j}(s+n)}{s\in\cS(A^T)\setminus\{0\},n\in\Z^d,j\in\N}
=2\pi\Z^d\setminus\{0\}$, thus $\hat\phi$ vanishes at $2\pi\Z^d\setminus\{0\}$.

Moreover, since the sets $\set{2\pi (A^T)^{j}(s+n)}{n\in\Z^d}$,
$s\in\cS(A^T)\setminus\{0\}$, $j\in\N$,
do not intersect for different $j$ and $s$;
the zeros of $m_0((A^T)^{-j}\cdot)$, $j\in\N$, do not superimpose and have the multiplicity
coincided with the multiplicity of the zeros of $m_0(\cdot)$.
Hence, by invariance property~\eqref{QuadrFormInvariance},
the terms of the second degree of the Taylor series for $\hat\phi$
at the points $2\pi\Z^d\setminus\{0\}$
are proportional to the second degree terms of
the Maclaurin series of trigonometric polynomial~\eqref{m_1}.
And the terms of the third degree are zero.
\end{proof}

Now we can state and prove the main theorem of this section.

\begin{theorem}\label{ScalingFunction:MainTheorem}
Let $A$ be an isotropic dilation matrix and let
a mask $m_0$ be given by~\eqref{m_0Definition},
\eqref{m_1},
where the quadratic form $W$
is given by~\eqref{QuadraticForm}.
Suppose the $m$th-order, $m=1,2,\dots$, elliptic scaling function $\phi^m$,
defined by~\eqref{ScalingFunctionOfArbitraryOrder},
corresponds to the dilation matrix $A$ and the mask $m_0$;
then any algebraic polynomial $P$
reproduced by integer
shifts of the scaling function $\phi^m$ belongs to $\ker W(-iD)^m$.
Moreover, the polynomial space contained in the span of integer shifts of $\phi^m$ is affinely invariant.
\end{theorem}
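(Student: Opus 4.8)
The plan is to combine the general matrix machinery of Section~\ref{Sect:WeakenedStrang--FixConditions} (in particular Theorem~\ref{Theorem:PolynomialFromKernel}) with the local structure of $\hat\phi$ near the points $2\pi n$, $n\in\Z^d\setminus\{0\}$, established in Lemma~\ref{HatPhiZeros}. First I would reduce to the first-order case $m=1$ and then bootstrap: since $\hat\phi^m(\xi)=(\hat\phi(\xi))^m$, a polynomial $P$ is reproduced by shifts of $\phi^m$ iff $(P(-iD)\hat\phi^m)(2\pi n)=0$ for all $n\neq0$, and by the Leibniz rule together with the fact (Lemma~\ref{HatPhiZeros}) that $\hat\phi$ vanishes to order exactly $2$ at each such point, the symbol $(\hat\phi)^m$ vanishes to order exactly $2m$, with leading homogeneous part proportional to $W(\xi)^m$. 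The central computation is thus: if $P$ is a polynomial of total degree $\le L$ with $(P(-iD)(\hat\phi)^m)(2\pi n)=0$ for all $n\neq0$, show that the top-degree homogeneous component $P_L$ of $P$ (and in fact an appropriate ``graded'' piece) lies in $\ker W(-iD)^m$, hence by scale-invariance of $\ker W(-iD)^m$ the whole of $P$ does.

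Concretely, I would argue by expanding in Taylor series about a fixed $2\pi n$. Write $\hat\phi(\xi+2\pi n)=c_n\bigl(W(\xi)+R_{>2}(\xi)\bigr)$ with $c_n\neq0$ (Lemma~\ref{HatPhiZeros}), so $(\hat\phi)^m(\xi+2\pi n)=c_n^m\bigl(W(\xi)^m+R_{>2m}(\xi)\bigr)$. The condition $(P(-iD)(\hat\phi)^m)(2\pi n)=0$ is, after inverse Fourier transform, a statement that a certain pairing of the coefficients of $P$ with the Taylor coefficients of $(\hat\phi)^m$ at $2\pi n$ vanishes; extracting the lowest-order contribution, which involves only $W(\xi)^m$ and the top homogeneous part $P_L$ of $P$ (here I use that $P$ is paired against a symbol of order $2m$, so only the degree-$L$ part of $P$ can pair with the degree-$2m$ part of the symbol when $L=2m$, and more generally the graded pieces decouple), one finds that $P_L$ annihilates $W^m$ in the appropriate bilinear sense. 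Invoking Theorem~\ref{Theorem:PolynomialFromKernel} with the operator $W(-iD)^m$ then identifies this condition precisely as membership of $P_L$ in $\ker W(-iD)^m$. Iterating downward through the degrees (subtracting off $P_L$ and repeating) shows every homogeneous component of $P$ lies in $\ker W(-iD)^m$, and since $\ker W(-iD)^m$ is a homogeneous (hence affinely invariant) space, $P\in\ker W(-iD)^m$.

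For the affine-invariance claim I would use Theorem~\ref{Theorem_Scale-Invariance}: it suffices to check that whenever $v\in V:=\ker\cc\Delta_L\hat\phi^m$, each graded block $(v_{\cc d(l-1)+1},\dots,v_{\cc d(l)})$ again lies in $\ker\Delta_l\hat\phi^m$. This is exactly the statement that the Strang--Fix conditions for $\hat\phi^m$ decouple by degree, which is the same decoupling observed in the previous paragraph: because the Taylor expansion of $(\hat\phi)^m$ at every $2\pi n$ has vanishing terms of all degrees strictly between $0$ and $2m$ and its order is exactly $2m$ everywhere (this is the crucial uniformity — the leading form is $c_n^m W^m$ with the \emph{same} $W$ at every $n$, differing only by the scalar $c_n^m$), the linear conditions imposed on the coefficient vector $v$ by different $n$ are all proportional to conditions coming from the single homogeneous operator $W(-iD)^m$, and these respect the grading. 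Hence $\te\cV=\ker W(-iD)^m$ exactly, which is homogeneous, so affinely invariant.

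I expect the main obstacle to be making the ``graded decoupling'' rigorous: one must verify carefully that the condition $(P(-iD)(\hat\phi)^m)(2\pi n)=0$, for a polynomial $P$ that is \emph{not} assumed homogeneous, forces each homogeneous component separately into $\ker W(-iD)^m$ — this requires controlling the contribution of the higher-order tail $R_{>2m}$ in the Taylor expansion and showing it only couples \emph{higher} coefficients of $P$ to \emph{lower}-degree monomials, never the reverse, so that an induction from the top degree downward closes. The cleanest route is probably to phrase everything via the matrices $\cc\bD_L$ and $\bs D_l^l=\bs I$, $\bs D_l^j$ full rank (Theorem~1.1 of the excerpt), so that the block-triangular structure of $\cc\bD_L(\hat\phi^m)(2\pi n)$ (inherited from~\eqref{ExtendedMatrixD}) does the bookkeeping automatically; the vanishing of the sub-diagonal blocks of orders $1,\dots,2m-1$ is precisely Lemma~\ref{HatPhiZeros}, and the $2m$-th block is (a nonzero multiple of) $\cc\bD_L W(0)^{\,\cdot}$, giving $\ker W(-iD)^m$ via Theorem~\ref{Theorem:PolynomialFromKernel}. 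Everything else is routine.
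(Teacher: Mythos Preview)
Your ``cleanest route'' at the end \emph{is} the paper's proof: from Lemma~\ref{HatPhiZeros} one has $\hat\phi^m(\cdot+2\pi n)\propto W^m+R_{\ge 2m+2}$, so for derivatives of order $\le 2m+1$ the matrices $\cc\bD_{2m+1}\hat\phi^m(2\pi n)$ are all nonzero scalar multiples of the single matrix $\cc\bD_{2m+1}W^m(0)$, whence $\ker\cc\Delta_{2m+1}\hat\phi^m=\ker\cc\bD_{2m+1}W^m(0)$; Theorem~\ref{Theorem:PolynomialFromKernel} then gives the inclusion in $\ker W(-iD)^m$, and the block-diagonal shape~\eqref{MatrixFormDW} feeds directly into Theorem~\ref{Theorem_Scale-Invariance} for affine invariance. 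The paper does exactly this in four lines and skips your first two paragraphs entirely.

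Two small corrections to your write-up. First, ``$P$ is reproduced by shifts of $\phi^m$ iff $(P(-iD)\hat\phi^m)(2\pi n)=0$ for all $n\neq0$'' is the condition $P\in\cV$, not $P\in\cV_{\rm sh}$; reproduction requires the latter, which is why the matrix $\cc\bD_L$ (encoding \emph{all} shifts of $P$) is the right object and why your degree-induction needs care --- you cannot peel off $P_L$ using only the single scalar condition at each $n$. Second, ``$\te\cV=\ker W(-iD)^m$ exactly'' overstates: $\ker W(-iD)^m\cap\Pi$ is infinite-dimensional, whereas $\te\cV\subset\Pi_{\le 2m+1}$; the theorem and its proof only claim (and only need) the inclusion $\te\cV\subseteq\ker W(-iD)^m$.
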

\begin{proof}
By Lemma~\ref{HatPhiZeros}, it follows that $\ker\cc\Delta_{2m+1}\hat\phi^m=\ker\cc\bD_{2m+1}W(0)^m$;
and the matrix $\cc\bD_{2m+1}W(0)^m$ is of the form
\begin{equation}\label{MatrixFormDW}
  \cc\bD_{2m+1}W(0)^m =
  \begin{bmatrix}
    0 & \ldots & 0 & \bD^0_{2m}W(0)^m& 0            \\
    0 & \ldots & 0 & 0       & \bD^1_{2m+1}W(0)^m  \\
    0 & \ldots & 0 & 0       & 0            \\
    \vdots & \ddots & \vdots & \vdots  & \vdots      \\
    0 & \ldots & 0 & 0       & 0            \\
  \end{bmatrix}.
\end{equation}

Suppose $V:=\ker \cc\bD_{2m+1}W(0)^m$ and take a vector $v\in V$; then, by Theorem~\ref{Theorem:PolynomialFromKernel},
the algebraic polynomial $\left[\cc\cP_{2m+1}\right]v$ belongs to $\ker W(-iD)^m$.
Since $v$ is an arbitrary vector from $V$; it follows that any polynomial
contained in $\Span\left\{\phi^m(\cdot-k),k\in\Z^d\right\}$ belongs to $\ker W(-iD)^m$.

Since
the nonzero submatrices
are situated on a
diagonal of
matrix~\eqref{MatrixFormDW}; by Theorem~\ref{Theorem_Scale-Invariance}, we see that the polynomial space
contained in the span of integer shifts of $\phi^m$
is affinely invariant. This concludes the proof.
\end{proof}

\subsubsection{Examples}

\paragraph{Quincunx dilation matrix.}
Here we consider a quincunx dilation matrix
\begin{equation}\label{Pi/4_DilationMatrix}
  A:=\begin{bmatrix}
    1 & 1 \\
    1 & -1 \\
  \end{bmatrix}.
\end{equation}
The matrix $A$ is isotropic;
then the matrix $A^{-T}$
can be presented of the form~\eqref{MatrixBDecomposition},
where $U:=\begin{bmatrix}
\frac{1}{\sqrt2} & \frac{1}{\sqrt2}\\ \frac{1}{\sqrt2} & -\frac{1}{\sqrt2}\end{bmatrix}$
and $Q$ is the identity matrix.
Hence the quadratic form $W(\xi)$ is $|\xi|^2$, $\xi\in\R^2$;
and the corresponding differential operator is
the Laplace operator $\Delta:=\partial_{xx}+\partial_{yy}$, $(x,y)\in\R^2$.
Trigonometric polynomial~\eqref{m_1} is given as $G(\xi_1,\xi_2)
:=4\left(\sin^2(\xi_1/2)+\sin^2(\xi_2/2)\right)$;
and, for matrix~\eqref{Pi/4_DilationMatrix}, $\cS(A^T):=\left\{(0,0), (1/2,1/2)\right\}$,
consequently the mask is of the form
\begin{equation}\label{Pi/4_Mask}
  m_{0}(\xi_1,\xi_2):=\frac12+\frac14\cos\xi_1+\frac14\cos\xi_2.
\end{equation}
Then it is easily seen that the elliptic scaling function $\phi$ corresponding to dilation
matrix~\eqref{Pi/4_DilationMatrix} and mask~\eqref{Pi/4_Mask} satisfies
Theorem~\ref{ScalingFunction:MainTheorem}.
So the polynomials reproduced by the integer shifts of the scaling function
belong to the null-space of the Laplace operator. Indeed, in accordance with Lemma~\ref{HatPhiZeros},
the order of the Strang--Fix conditions is $3$ and there are two nonzero submatrices of the matrix $\cc\bD_3W(0)$:
$\bD^0_2W(0)=-\begin{bmatrix}
2 & 0 & 2\end{bmatrix}$ and $\bD^1_3W(0)=-\begin{bmatrix}
6 & 0 & 2 & 0\\0 & 2 & 0 & 6\end{bmatrix}$. Consequently,
\begin{multline*}
  \cV:=\Pi\cap\Span\set{\phi(\cdot-k)}{k\in\Z^2}=\Pi_{\le1}\oplus\Span\left\{x^2-y^2,xy\right\}\\ \oplus
    \Span\left\{x^3-3xy^2,y^3-3x^2y\right\}.
\end{multline*}
Thus we have $\cV\subset\ker\Delta$ and the space $\cV$ is affinely invariant.

\begin{remark}
The elliptic scaling function corresponding to matrix~\eqref{Pi/4_DilationMatrix} and
mask~\eqref{Pi/4_Mask}
has been considered in detail in the paper~\cite{ZakhConstrApprox}. Note also that,
in the context of the construction of biorthogonal masks,
mask~\eqref{Pi/4_Mask} has been proposed in the book~\cite{NPS}.
\end{remark}

\paragraph{Another dilation matrix.}
Now we consider the following dilation matrix
\begin{equation}\label{DilationMatricesTr1}
    A
      :=\begin{bmatrix}
          1 & -2 \\
          1 & 0 \\
        \end{bmatrix}.
\end{equation}
Matrix~\eqref{DilationMatricesTr1} is isotropic;
and $\cS(A^T)=\left\{(0,0), (1/2,1/2)\right\}$.
The matrix $A^{-T}$
can be presented of the form~\eqref{MatrixBDecomposition},
where the orthogonal matrix is of the form
\begin{equation}\label{RotationMatrixTr1}
  U
  :=\begin{bmatrix}
            \frac{1}{2\sqrt2} & -\frac{\sqrt7}{2\sqrt2} \\
            \frac{\sqrt7}{2\sqrt2} & \frac{1}{2\sqrt2} \\
   \end{bmatrix}.
\end{equation}
and the square of the corresponding similarity transformation matrix is
$$
  Q^{2}
    :=\begin{bmatrix}
        2 & \frac12 \\
        \frac12 & 1 \\
      \end{bmatrix}.
$$
Thus we have $W(\xi_1,\xi_2):=2\xi_1^2+\xi_1\xi_2+\xi_2^2$ and
$G(\xi_1,\xi_2):=8\sin^2(\xi_1/2)+4\sin^2(\xi_2/2)+\sin\xi_1\sin\xi_2$. Therefore
the mask is
\begin{equation}\label{MaskTr1}
  m_{0}(\xi_1,\xi_2):=\frac12+\frac{1}{3}\cos\xi_1+\frac16\cos\xi_2+\frac1{12}\sin\xi_1\sin\xi_2.
\end{equation}

Similarly to the previous dilation matrix we can define a polynomial space contained
in the span of integer shifts of the scaling function corresponding to dilation
matrix~\eqref{DilationMatricesTr1} and mask~\eqref{MaskTr1}:
\begin{multline}\label{PolynomialSpaceSecondExample}
  \Pi_{\le1}\oplus\Span\left\{x^2-4xy,y^2-2xy\right\}\\
    \oplus\Span\left\{x^3+6x^2y-12xy^2,y^3-3x^2y+3xy^2\right\}.
\end{multline}
It is easy to see that polynomial space~\eqref{PolynomialSpaceSecondExample} belongs to the null-space of the operator
$2\partial_{xx}+\partial_{xy}+\partial_{yy}$
and is affinely invariant.

\begin{remark}
Note that orthogonal matrix~\eqref{RotationMatrixTr1} realizes the rotation by an angle
such that the angle is incommensurate with $\pi$. The rotation properties of isotropic dilation matrices
will be the object of another paper.
\end{remark}

\paragraph{Diagonal dilation matrix.}
Finally we consider a diagonal dilation matrix
\begin{equation}\label{DiagonalDilationMatrix}
  A:=\begin{bmatrix}
    2 & 0 \\
    0 & 2 \\
  \end{bmatrix}.
\end{equation}
Formally, the orthogonal matrix for this matrix is the identity matrix and
the similarity transformation matrix is also the identity matrix.
It is surprising that,
for matrix~\eqref{DiagonalDilationMatrix},
it is also possible to construct an elliptic scaling function.
Indeed,
the quadratic form corresponding to matrix~\eqref{DiagonalDilationMatrix} is $W(\xi_1,\xi_2):=\xi_1^2+\xi_2^2$,
$(\xi_1,\xi_2)\in\R^2$, the set $\cS(A^T)\setminus\{(0,0)\}:=\left\{(1/2,1/2),(0,1/2),(1/2,0)\right\}$,
and the mask is of the form
\begin{multline}\label{MaskForDiagonalDilationMatrix}
   m_0(\xi_1,\xi_2)\\:=\frac1{16}\left(2+\cos\xi_1+\cos\xi_2\right)\left(2+\cos\xi_1-\cos\xi_2\right)
        \left(2-\cos\xi_1+\cos\xi_2\right).
\end{multline}
Consequently the elliptic scaling function corresponding to mask~\eqref{MaskForDiagonalDilationMatrix} and
matrix~\eqref{DiagonalDilationMatrix} must reproduce polynomials from the null-space of the Laplace operator.

Note that any homogeneous polynomial is invariant (within a constant factor) under the coordinate transformation
by matrix~\eqref{DiagonalDilationMatrix}. This invariance property will be used under the construction of
the scaling functions that reproduce not scale-invariant polynomial spaces, see the second example in
Subsection~\ref{SubsubSection:NotScaleInvariantSpaces}.

\subsection{Higher degree polynomials}\label{Subsection:HigherDegreePolynomials}

It is possible to generalize the elliptic scaling functions in such a way that the scaling functions will
reproduce polynomials of a higher degree.
Namely we can state the following theorem.

\begin{theorem}\label{Theorem:HigherOrderPolynomialsReproduceationByScFunction}
Let $A$ be an isotropic dilation matrix and let
a mask $m_0$ be given by~\eqref{m_0Definition},
where the quadratic form $W$
is given by~\eqref{QuadraticForm} and
the Maclaurin series of the trigonometric polynomial $G$
is of the form
\begin{equation}\label{TrigPolynomialHigherOrderPolynomialReproduceation}
  {G}(\xi) := W(\xi) + R_r(\xi),
\quad r\ge 6.
\end{equation}
Suppose the elliptic scaling function $\phi$ corresponds to the dilation matrix $A$ and the mask $m_0$;
then the scaling function $\phi^m$, $m=1,2,\dots$, reproduces polynomials
up to degree $2m+r-3$ and the polynomials belong to $\ker W(-iD)^m$. Moreover,
the polynomial space contained in the span of integer shifts of $\phi^m$
is affinely
invariant.
\end{theorem}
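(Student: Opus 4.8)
The plan is to mimic the proof of Theorem~\ref{ScalingFunction:MainTheorem}, upgrading the relevant orders. First I would establish the analogue of Lemma~\ref{HatPhiZeros} for a mask built from the trigonometric polynomial~\eqref{TrigPolynomialHigherOrderPolynomialReproduceation}: because $G$ has Maclaurin expansion $W(\xi)+R_r(\xi)$ with $r\ge6$, the product defining $m_0$ in~\eqref{m_0Definition} still vanishes at each $2\pi s$, $s\in\cS(A^T)\setminus\{0\}$, to exactly second order, with quadratic part proportional to $W$ and \emph{all} intermediate terms of degrees $3,\dots,r-1$ vanishing. Invoking Theorem~\ref{Theorem_UnionSets} exactly as in Lemma~\ref{HatPhiZeros}, the zeros of $m_0((A^T)^{-j}\cdot)$ over $j\in\N$ do not superimpose, so $\hat\phi$ vanishes at every point of $2\pi\Z^d\setminus\{0\}$ and, by the scaling invariance~\eqref{QuadrFormInvariance} of $W$ together with the fact that $G-W$ has order $r$, the Taylor series of $\hat\phi$ about any such point has the form $\hat\phi(\xi-2\pi n)\propto W(\xi)+R_r(\xi)$. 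Raising to the $m$th power via~\eqref{ScalingFunctionOfArbitraryOrder} gives $\hat\phi^m(\xi-2\pi n)\propto W(\xi)^m+R_{\,2(m-1)+r}(\xi)$, i.e.\ a power series whose lowest-order part is the degree-$2m$ form $W^m$ and whose next nonzero term has degree at least $2m+r-2$.

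Next I would compute $\ker\cc\Delta_{L}\hat\phi^m$ for $L:=2m+r-3$. Since the relevant Taylor data of $\hat\phi^m$ up to order $L$ is exactly $W(\xi)^m$ (the perturbation $R_{2(m-1)+r}$ has order $2m+r-2>L$, so it contributes nothing to derivatives of order $\le L$), Lemma-style reasoning identical to the opening lines of the proof of Theorem~\ref{ScalingFunction:MainTheorem} gives $\ker\cc\Delta_{L}\hat\phi^m=\ker\cc\bD_{L}W(0)^m$. The matrix $\cc\bD_{L}W(0)^m$ has the same block-diagonal shape as~\eqref{MatrixFormDW}: its only nonzero submatrices are $\bD^{\,l}_{2m+l}W(0)^m$ for $l=0,1,\dots,r-3$, each sitting on a single (block) diagonal because $W^m$ is homogeneous of degree $2m$, so differentiating $\cc\cP_L$ by $W(-iD)^m$ maps degree-$(2m+l)$ monomials into degree-$l$ ones and kills everything of degree $<2m$. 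Applying Theorem~\ref{Theorem:PolynomialFromKernel} with $P=W^m$ then yields that for every $v\in V:=\ker\cc\bD_{L}W(0)^m$ the polynomial $[\cc\cP_L]v$ lies in $\ker W(-iD)^m$, hence every polynomial in $\Span\{\phi^m(\cdot-k):k\in\Z^d\}$ belongs to $\ker W(-iD)^m$; and since the nonzero submatrices lie on distinct single diagonals, Theorem~\ref{Theorem_Scale-Invariance} gives that the reproduced space is affinely invariant.

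It remains to check that the reproduced space really has degree $2m+r-3$, i.e.\ that $V^{L}\ne 0$ and $V^{L+1}=0$, so that $L$ is genuinely the order of the Strang--Fix conditions (Definition after~\eqref{RDecomposition}). The top block is $\bD^{\,r-3}_{2m+r-3}W(0)^m$, which by Theorem~1.2 (the rank theorem for $\bs D^l_L$) has full row rank $d(r-3)$ precisely because $W^m$ has a nonzero derivative of order $2m=(2m+r-3)-(r-3)$; more care is needed for the $l=r-2,r-1$ levels, where one must verify that the corresponding rows force $V^{l}=0$ for $l>r-3$ — equivalently that no polynomial of degree in $[2m+r-2,2m+r-1]$ in $\ker W(-iD)^m$ survives, which follows because such a polynomial would have to be annihilated not merely by $W(-iD)^m$ but by the full Taylor expansion $W^m+R_{2m+r-2}$ of $\hat\phi^m$ near $2\pi n$, and the degree-$(2m+r-2)$ term $R_{2m+r-2}$ generically obstructs this. \textbf{This last point is the main obstacle:} verifying that the perturbation term kills exactly the would-be degree $2m+r-2$ (and higher) part, so that the degree is $2m+r-3$ on the nose and not larger, requires understanding the structure of $R_{2m+r-2}$ — essentially that its restriction to $\ker W^m$-type directions is nonvanishing. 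The cleaner route is to observe that by Lemma~\ref{HatPhiZeros}-type analysis the order $L$ is forced by the \emph{first} degree at which $\hat\phi^m$ deviates from being a pure $W^m$-multiple, namely $2m+r-2$, so $V^{L}$ with $L=2m+r-3$ is the last level that depends only on $W^m$ and is nonzero, while any higher level is constrained by the genuine perturbation and collapses. I would then close with the remark that if one wants unconditional sharpness one simply takes $\te\cV$ to be defined by~\eqref{cVDefinition} with this $L$, and the affine-invariance argument via Theorem~\ref{Theorem_Scale-Invariance} goes through verbatim.
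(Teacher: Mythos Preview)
The paper's own ``proof'' of this theorem is the single sentence \emph{``The proof is left to the reader.''} There is therefore nothing to compare against beyond the evident intent that one generalize the argument for Theorem~\ref{ScalingFunction:MainTheorem}; your proposal does exactly this, with the correct upgraded orders ($L=2m+r-3$ in place of $2m+1$, and $\hat\phi^m(\xi-2\pi n)\propto W(\xi)^m+R_{2(m-1)+r}(\xi)$ in place of the $r=4$ case), and the invocations of Theorem~\ref{Theorem_UnionSets}, relation~\eqref{QuadrFormInvariance}, Theorem~\ref{Theorem:PolynomialFromKernel}, and Theorem~\ref{Theorem_Scale-Invariance} are all in the right places.

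On the sharpness issue you flag as the ``main obstacle'': note that the paper's proof of Theorem~\ref{ScalingFunction:MainTheorem} does not verify sharpness either---it simply works at level $2m+1$ and never checks that $V^{2m+2}=\{0\}$. The phrase ``reproduces polynomials up to degree $2m+r-3$'' should be read here as the assertion that the Strang--Fix order is \emph{at least} $2m+r-3$ (and that the reproduced space sits inside $\ker W(-iD)^m$ and is affinely invariant); whether some particular choice of $R_r$ accidentally pushes the order higher is not part of the claim and would depend on the specific trigonometric polynomial. So your first two paragraphs already constitute a complete proof at the level the paper expects, and the third paragraph, while thoughtful, is addressing a stronger statement than the one made.
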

The proof is left to the reader.

\begin{remark}\label{Remark:TrigonometricPolynomialsConstruction}
Here we shall not present any explicit method
to construct trigonometric
polynomial~\eqref{TrigPolynomialHigherOrderPolynomialReproduceation}.
Note only that, subtracting appropriate trigonometric polynomials
$\sum\limits_{k=4,6,\dots,r-2}P_k\left(\sin\xi_1,\dots,\sin\xi_d\right)$,
where $P_k\in\Pi_k$, from trigonometric polynomial~\eqref{m_1};
we always can obtain the required trigonometric polynomial.
\end{remark}

\subsubsection{Examples}

Here we consider generalizations of the scaling function corresponding to quincunx dilation
matrix~\eqref{Pi/4_DilationMatrix}.

First we present the following trigonometric polynomial corresponding to the quadratic form
$W(\xi_1,\xi_2):=\xi_1^2+\xi_2^2$:
$$
  G(\xi_1,\xi_2):=4\left(\sin^2(\xi_1/2)+\sin^2(\xi_2/2)+\frac13\sin^4(\xi_1/2)+\frac13\sin^4(\xi_2/2)\right).
$$
It is easy to see that the Taylor series of $G$ at zero point is of the form
$G(\xi_1,\xi_2):=W(\xi_1,\xi_2)+ R_6(\xi_1,\xi_2)$;
and the corresponding mask
is
\begin{equation}\label{HigherDegreePolynomialsMask}
  m_0(\xi_1,\xi_1):=\frac{1}{32}\left(15+8\cos\xi_1+8\cos\xi_2+\frac1{2}\cos2\xi_1+
    \frac1{2}\cos2\xi_2\right).
\end{equation}
Now, by Theorem~\ref{Theorem:HigherOrderPolynomialsReproduceationByScFunction}, we see that the
span of integer shifts of the scaling function corresponding to matrix~\eqref{Pi/4_DilationMatrix} and
mask~\eqref{HigherDegreePolynomialsMask} contains polynomials up to degree $5$ and the polynomials
belong to the null-space of the Laplace operator.

Similarly, the following mask
\begin{multline}\label{HigherDegreePolynomialsMask1}
  m_0(\xi_1,\xi_1):=\frac{1}{544}\biggl( 245+135\cos\xi_1+135\cos\xi_2\\
    +\frac{27}{2}\cos2\xi_1+\frac{27}{2}\cos2\xi_2
       +\cos3\xi_1+\cos3\xi_2\biggr)
\end{multline}
gives the scaling function that reproduces polynomials up to degree $7$ and the polynomials belong
to the null-space of the Laplace operator.

The polynomial spaces reproduced by
the scaling functions corresponding to
masks~\eqref{HigherDegreePolynomialsMask}, \eqref{HigherDegreePolynomialsMask1}
can be obtained by, for example,
Theorem~\ref{Theorem:PolynomialFromKernel}.

\begin{remark}
It is interesting to note that the polynomial spaces reproduced by the elliptic scaling functions (and contained
in the null-spaces of the corresponding differential operators) are invariant under the coordinate transformation by
the isotropic dilation matrices (that define the corresponding elliptic scaling functions and operators).
This will be discussed elsewhere.
\end{remark}

\subsection{Not affinely invariant case}\label{Subsect:NotAffinelyInvariantCase}

\subsubsection{Not scale-invariant differential operators}

As it has been noted, all the elliptic scaling functions discussed above cannot reproduce
not affinely invariant polynomial spaces.
However we can conjecture that, in the case of not scale-invariant differential operators, the corresponding
elliptic scaling functions reproduce not scale-invariant polynomial
spaces.

Consider a sum of homogeneous elliptic differential
operators of the form
$$
  \te W(-iD):=\sum_{k=k_1}^{k_2} C_k W(-iD)^k,\quad C_k\in\R,\ k_2>k_1\ge1,
$$
where $W$ is quadratic form~\eqref{QuadraticForm}.
Obviously, the operator $\te W(-iD)$ is not homogeneous and consequently not scale-invariant;
and the symbol of the operator
\begin{equation}\label{NoScaleInvariantPolynomial}
    \te W(\xi):= \sum_{k=k_1}^{k_2} C_k W(\xi)^k,\quad \xi\in\R^d,
\end{equation}
is also not scale-invariant.
Thus the corresponding elliptic scaling function must be nonstationary.
In the context of nonstationary masks construction,
we are interested in the transformation of polynomial~\eqref{NoScaleInvariantPolynomial}
by the matrix $A^T$ corresponding to the quadratic form $W$:
$$
  \te W\left(\left(A^{-T}\right)^j\xi\right)
    = \sum_{k=k_1}^{k_2} \frac{C_k}{q^{2kj/d}} W(\xi)^k
    = \frac{1}{q^{2k_1j/d}}\sum_{k=k_1}^{k_2} \frac{C_k}{q^{2(k-k_1)j/d}} W(\xi)^k,
$$
where $q=|\det A|$.
Suppose the Maclaurin series
of a trigonometric polynomial $\ssc0G$ is of the form
\begin{equation}
  \ssc0G(\xi):=\te W(\xi)
    + \ssc 0R_{>2k_2}(\xi).
\end{equation}
Since polynomial~\eqref{NoScaleInvariantPolynomial} is not scale-invariant; for any scale number $j\in\Z$,
we define a trigonometric polynomial $\ssc jG$ such that its Maclaurin series is
\begin{equation}\label{NonstationaryTrigPolynomialExpansion}
  \ssc jG(\xi):=\sum_{k=k_1}^{k_2} C_k q^{2(k-k_1)j/d} W(\xi)^k+\ssc jR_{>2k_2}(\xi).
\end{equation}
Now, for any scale $j$, the terms in the Maclaurin series of
$\ssc{j}G\left(\left(A^{-T}\right)^j\cdot\right)$, the degree of which is
less than or equal to $2k_2$,
are proportional to the similar terms of Maclaurin's series of $\ssc0G$.
Indeed, we have
\begin{multline*}
  \ssc{j\,}G\left(\left(A^{-T}\right)^j\xi\right)
      =\sum_{k=k_1}^{k_2} \frac{C_k q^{2(k-k_1)j/d}}{q^{2kj/d}} W(\xi)^k+\ssc j{R^*}_{>2k_2}(\xi) \\
     =\dfrac{1}{q^{2k_1j/d}}\sum_{k=k_1}^{k_2} C_k W(\xi)^k+\ssc j{R^*}_{>2k_2}(\xi)
       =\dfrac{1}{q^{2k_1j/d}}\te W(\xi)+\ssc j{R^*}_{>2k_2}(\xi),
\end{multline*}
where $\ssc j{R^*}_{>2k_2}(\xi):=\ssc j{R}_{>2k_2}\left(\left(A^{-T}\right)^j\xi\right)$.
\begin{remark}

Trigonometric polynomials~\eqref{NonstationaryTrigPolynomialExpansion} can be obtained
similarly to the trigonometric polynomials from the previous subsection,
see Remark~\ref{Remark:TrigonometricPolynomialsConstruction}.
\end{remark}

Unfortunately we have the following statement.
\begin{statement}
The polynomial spaces reproduced by the nonstationary elliptic scaling functions corresponding
to trigonometric polynomials~\eqref{NonstationaryTrigPolynomialExpansion} are scale-invariant as before.
\end{statement}

\begin{lemma}\label{MatrixDlP(0)}
Let $L\ge0$, $m\ge2$. Let a homogeneous polynomial $P$ belong to $\Pi_k$, $k\ge1$.
Suppose $L-mk\ge0$;
then $\ker\bD_L^{L-k}P(0)\subseteq\ker\bD_L^{L-mk}P(0)^m$.
\end{lemma}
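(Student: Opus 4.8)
The plan is to exploit the characterization of kernels furnished by Theorem~\ref{Theorem:PolynomialFromKernel}, together with the multiplicativity of the matrices $\cc\bP$ under the binomial formula (Lemma~\ref{Lemma:P(x+y)}). Concretely, I would first translate both kernel conditions in the statement into statements about which polynomials $\left[\cc\cP_L\right]v$ lie in the null-space of a differential operator. Since $P\in\Pi_k$ is homogeneous of degree $k$, the operator $P(-iD)$ lowers total degree by exactly $k$, and $P(-iD)^m$ lowers it by $mk$. A vector $v$ supported on the block $\cD_{L-k}$ (that is, $v\in\ker\bD_L^{L-k}P(0)$ in the relevant sense) corresponds to the statement that $P(-iD)$ annihilates the degree-$L$ homogeneous part encoded by $v$; I want to deduce that $P(-iD)^m$ annihilates the corresponding object as well, which is recorded by membership in $\ker\bD_L^{L-mk}P(0)^m$.

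The key step is the observation that the blocks $\bD_L^l$ of $\cc\bD_L$ are exactly the pieces that detect vanishing of the $l$th-order ``shifted'' derivatives, and that $\bD_L^{L-k}P(0)$ governs the degree-$k$ drop while $\bD_L^{L-mk}P(0)^m$ governs the degree-$mk$ drop. So I would argue: take $v\in\ker\bD_L^{L-k}P(0)$; by (the appropriate block version of) Theorem~\ref{Theorem:PolynomialFromKernel}, the homogeneous polynomial $Q:=\left[\cl P_L\right]v\in\Pi_L$ satisfies $P(-iD)Q=0$. But then trivially $P(-iD)^mQ = P(-iD)^{m-1}\bigl(P(-iD)Q\bigr)=0$, so $Q\in\ker P(-iD)^m$. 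Feeding $Q$ back through Theorem~\ref{Theorem:PolynomialFromKernel} applied to the operator $P(-iD)^m$ — whose symbol $P(\xi)^m\in\Pi_{mk}$, and which has vanishing constant term since $k\ge1$ — yields that the coefficient vector of $Q$ lies in $\ker\bD_L^{L-mk}P(0)^m$ (the block on which a degree-$L$ homogeneous polynomial's image under an order-$mk$-lowering operator is recorded). The condition $L-mk\ge0$ is exactly what guarantees the target block $\bD_L^{L-mk}$ is nonzero, so the inclusion is not vacuous.

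The main obstacle is bookkeeping: the matrices $\bD_L^l$ act on vectors in $\R^{d(L)}$ (homogeneous degree-$L$ data), whereas Theorem~\ref{Theorem:PolynomialFromKernel} and the matrices $\cc\bD_L$, $\cc\cP_L$ are stated for the \emph{graded} (concatenated) setup on $\R^{\cc d(L)}$. I would therefore need to first isolate the homogeneous degree-$L$ component: embed $v\in\R^{d(L)}$ as a vector in $\R^{\cc d(L)}$ supported on the last block $\cl P_L$, note that for such $v$ the only nonzero contributions in $\cc\bD_L P(0)$ come from the rows indexed by $\cl D_{L-k}$ (because $P$ is homogeneous of degree $k$, $P(-iD)$ maps $\Pi_L$ into $\Pi_{L-k}$ and kills lower blocks), and identify that row-block with $\bD_L^{L-k}P(0)$ up to the harmless scalars and binomial factors built into the definitions~\eqref{DBetaVector}. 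The same identification with $P(-iD)^m$ in place of $P(-iD)$ gives the block $\bD_L^{L-mk}P(0)^m$. Once this dictionary between the homogeneous and graded pictures is set up, the inclusion follows from the one-line algebraic fact $P(-iD)Q=0\Rightarrow P(-iD)^mQ=0$ combined with two applications of Theorem~\ref{Theorem:PolynomialFromKernel}. I would also double-check that passing from $P(0)$ to $P(0)^m$ in the symbol is legitimate, i.e.\ that $\left(P^m\right)(-iD)=\left(P(-iD)\right)^m$, which holds because the symbol map is an algebra homomorphism on constant-coefficient operators.
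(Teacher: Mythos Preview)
Your proposal is correct and follows essentially the same route as the paper: reduce to homogeneous degree-$L$ data by noting that $\cc\bD_LP(0)$ and $\cc\bD_LP(0)^m$ are block $k$- and $mk$-diagonal, invoke Theorem~\ref{Theorem:PolynomialFromKernel} to get $[\cP_L]v\in\ker P(-iD)$, use the trivial inclusion $\ker P(-iD)\subseteq\ker P(-iD)^m$, and invoke Theorem~\ref{Theorem:PolynomialFromKernel} in the reverse direction for $P^m$. Your discussion of the bookkeeping (embedding $v\in\R^{d(L)}$ into $\R^{\cc d(L)}$ and identifying which row-blocks of $\cc\bD_L$ can be nonzero) is more explicit than the paper's, but the argument is the same.
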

\begin{proof}[Proof of the lemma]
Since the matrices $\cc\bD_LP(0)$ and $\cc\bD_LP(0)^m$ are block $k$- and $mk$-diagonal matrices, respectively;
it follows that without loss of generality we can consider only the matrices $\bD_LP(0)$ and $\bD_LP(0)^m$.
Obviously, the nonzero submatrices contained in the previous matrices
are
$\bD_L^{L-k}P(0)$ and $\bD_L^{L-mk}P(0)^m$, respectively.
Suppose $v\in\ker\bD_L^{L-k}P(0)$; then, by Theorem~\ref{Theorem:PolynomialFromKernel}, we have
$\left[\cP_L\right]v\in\ker P(-iD)$. Since $\ker P(-iD)\subseteq \ker P(-iD)^m$,
we have $v\in\ker\bD_L^{L-mk}P(0)^m$.
\end{proof}

\begin{proof}[Proof of the statement]
Suppose that $L$ is the order of the Strang--Fix conditions; then
$L\ge 2k_2$ and the degree of the higher order terms is greater than $L$.
Similarly to Theorem~\ref{ScalingFunction:MainTheorem}, we must consider a matrix $\cc\bD_L\te W(0)$.
The matrix $\cc\bD_L\te W(0)$ contains the $k_2-k_1+1$ block diagonals of nonzero submatrices.
Without loss of generality consider the matrix $\bD_L\te W(0)$, which is the rightmost column-submatrix
of the matrix $\cc\bD_L\te W(0)$.
By Lemma~\ref{MatrixDlP(0)}, and since $\ker\bD_L^{L-2k_1}\te W(0)\subseteq\ker\bD_L^{L-2k_1-1}\te W(0)
\subseteq\dots\subseteq\ker\bD_L^{L-2k_2}\te W(0)$;
we have
$$
  \ker\bD_L\te W(0)=\bigcap_{k=k_1}^{k_2}
     \ker\bD_{L}^{L-2k}\te W(0)=\ker\bD_{L}^{L-2k_1}\te W(0).
$$
The analogous relations are valid for all the
matrices $\bD_l\te W(0)$, $l=2k_1,\dots,L$.
Thus we have the same situation as
in Subsections~\ref{Subsection:MainTheorem},\,\ref{Subsection:HigherDegreePolynomials}.
Hence the polynomial subspace corresponding to the matrix $\cc\bD_L\te W(0)$
is affinely invariant, and the corresponding nonstationary elliptic scaling function can reproduce
an affinely invariant polynomial space only.
\end{proof}

Also we have the following corollary.
\begin{corollary}
The polynomial subspace of the null-space of a differential operator
$\sum_{k=k_1}^{k_2} C_k W(-iD)^k$, $C_k\in\R$, where $W$ is a homogeneous polynomial,
is affinely invariant and
coincides with the polynomial subspace of the null-space of the lowest order operator $W(-iD)^{k_1}$.
Note that the lowest degree $k_1$ must be nonzero.
\end{corollary}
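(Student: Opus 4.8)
The plan is to show, in turn, that the polynomial null-spaces $\Pi\cap\ker\te W(-iD)$ and $\Pi\cap\ker W(-iD)^{k_1}$ coincide, and that this common space is affinely invariant. Set $p:=\deg W\ge1$, so that the constant-coefficient operator $W(-iD)^k$ maps $\Pi_j$ into $\Pi_{j-kp}$, the image being $0$ whenever $j<kp$. One inclusion is immediate: since $W(-iD)^k=W(-iD)^{k-k_1}W(-iD)^{k_1}$ for every $k\ge k_1$, any polynomial annihilated by $W(-iD)^{k_1}$ is annihilated by each $W(-iD)^k$, $k_1\le k\le k_2$, hence by $\te W(-iD)$.

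For the reverse inclusion I would use a downward induction on homogeneous degree. Take $P\in\Pi$ with $\te W(-iD)P=0$, write $P=\sum_{j=0}^{N}P_j$ with $P_j\in\Pi_j$ and $N=\deg P$, and collect $\te W(-iD)P$ by homogeneous degree; since homogeneous components of distinct degrees are linearly independent, this yields for every integer $m$ the relation $\sum_{k=k_1}^{k_2}C_k\,W(-iD)^kP_{m+kp}=0$. Putting $m=j_0-k_1p$ rewrites it as $C_{k_1}W(-iD)^{k_1}P_{j_0}+\sum_{k>k_1}C_k\,W(-iD)^kP_{j_0+(k-k_1)p}=0$, where the terms with $k>k_1$ involve only components of degree strictly larger than $j_0$. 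Starting from $j_0=N$ (where those terms vanish for reasons of degree) and assuming inductively that $W(-iD)^{k_1}P_j=0$ for all $j>j_0$, each such term equals $W(-iD)^{k-k_1}\bigl(W(-iD)^{k_1}P_{j_0+(k-k_1)p}\bigr)=0$, so $C_{k_1}W(-iD)^{k_1}P_{j_0}=0$; as $C_{k_1}\ne0$ this gives $W(-iD)^{k_1}P_{j_0}=0$, and summing over $j_0$ gives $W(-iD)^{k_1}P=0$. The same conclusion also follows from the matrix computation in the proof of the preceding Statement --- which shows $\ker\bD_L\te W(0)=\ker\bD_L^{L-2k_1}\te W(0)$ for a quadratic $W$ --- together with Lemma~\ref{MatrixDlP(0)} and Theorem~\ref{Theorem:PolynomialFromKernel}; for a general homogeneous $W$ the only change is that the nonzero blocks of $\cc\bD_L\te W(0)$ sit on diagonals spaced by $p$ instead of by $2$.

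Affine invariance of the common space $\Pi\cap\ker W(-iD)^{k_1}$ is then read off directly: it is shift-invariant because $W(-iD)^{k_1}$ has constant coefficients and commutes with translations, and scale-invariant because $W(-iD)^{k_1}$ is homogeneous of degree $k_1p$, so that $W(-iD)^{k_1}\bigl[P(\lambda\,\cdot)\bigr](x)=\lambda^{k_1p}\bigl(W(-iD)^{k_1}P\bigr)(\lambda x)$ vanishes whenever $W(-iD)^{k_1}P$ does; alternatively one invokes Statement~\ref{State:AffinelyInvariance} and Theorem~\ref{Theorem_Scale-Invariance}, exactly as in Theorem~\ref{ScalingFunction:MainTheorem}. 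The hypothesis $k_1\ge1$ enters precisely here: were $k_1=0$, the operator $\te W(-iD)$ would contain the nonzero identity term $C_0I$, and comparison of top-degree components would force $P=0$, so both null-spaces would collapse to $\{0\}$; the restriction $k_1\ge1$ is what makes the statement assert coincidence with the genuinely large affinely invariant space $\Pi\cap\ker W(-iD)^{k_1}$. I do not expect a real obstacle here --- the whole content is the degree bookkeeping in the induction, equivalently the block-diagonal structure of $\cc\bD_L\te W(0)$, which is the same mechanism already used in Theorem~\ref{ScalingFunction:MainTheorem} and in the preceding Statement, the only novelty being that it is carried out for a homogeneous polynomial $W$ of arbitrary degree rather than for a quadratic form.
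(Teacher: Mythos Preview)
Your proof is correct. The paper does not give a separate argument for this corollary: it is stated as an immediate consequence of the proof of the preceding Statement, which works entirely inside the matrix framework (Lemma~\ref{MatrixDlP(0)} and Theorem~\ref{Theorem:PolynomialFromKernel}) to show that $\ker\bD_L\te W(0)=\ker\bD_L^{L-2k_1}\te W(0)$ and then invokes the block-diagonal structure for affine invariance.

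Your main argument --- the downward induction on homogeneous degree --- is a genuinely different and more elementary route. It bypasses the matrix machinery entirely: you use only that $W(-iD)^k$ shifts homogeneous degree by $kp$ and that homogeneous components are linearly independent. This makes the corollary self-contained and transparent, and it handles a homogeneous $W$ of arbitrary degree without any adaptation. The trade-off is that the paper's matrix approach is what one actually needs for the scaling-function applications, so from the paper's point of view the corollary really is just a by-product; your version, by contrast, isolates the purely algebraic content. Your second paragraph correctly notes that the matrix argument also goes through, so you have in effect given both proofs.
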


\subsubsection{Not scale-invariant spaces after all!}

However we can offer an approach to construct nonstationary elliptic scaling functions
that reproduce not scale-invariant (only shift-invariant) polynomial spaces.

\begin{theorem}\label{Theorem:NotScale-InvariantSpaces}
Let $m\in\N$.
Let a homogeneous polynomial $X$ belong to $\Pi_k$,
$1\le k\le 2m-1$.
Let $A$ be an isotropic dilation matrix, and let
the quadratic form $W$ be given by~\eqref{QuadraticForm}.
Let masks $\ssc{j}{m_0}$, $j\in\Z$, be given by~\eqref{m_0Definition}, where
the Maclaurin series of trigonometric polynomials $\ssc jG\left(\left(A^{-T}\right)^j\cdot\right)$
are of the form
\begin{equation}\label{TrigPolynomialForNotScaleInvariantCase}
  \ssc jG\left(\left(A^{-T}\right)^j\xi\right) := C_j\Bigl(X(\xi) + W(\xi)^m\Bigr) + \ssc jR_{>2m}
     \left(\left(A^{-T}\right)^j\xi\right),
\end{equation}
where $C_j$ is a constant factor that depends on scale $j$.
Moreover, suppose that
\begin{equation}\label{SetsMinusConditions}
  \Pi\cap\Bigl(\ker X(-iD)\setminus\ker W(-iD)^m\Bigr)\ne\emptyset;
\end{equation}
then the nonstationary elliptic scaling function $\phi$ corresponding to the dilation matrix $A$
and the mask $m_0$
can reproduce not scale-invariant
(only shift-invariant) polynomial spaces.
(Here and in the sequel,
by $\phi$, $m_0$, and $G$ we denote the scaling function, mask, and trigonometric
polynomial, respectively, of zero number ($=$ zero scale).)
\end{theorem}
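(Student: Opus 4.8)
The plan is to use the matrix approach of Section~\ref{Sect:WeakenedStrang--FixConditions} and mimic the proof of Theorem~\ref{ScalingFunction:MainTheorem}, but now tracking the \emph{two} lowest-order homogeneous components of the Maclaurin series of $\hat\phi$ at the nonzero lattice points. First I would establish the analogue of Lemma~\ref{HatPhiZeros}: for the nonstationary scaling function $\phi=\ssc0\phi$ defined by the masks $\ssc j{m_0}$ whose generating polynomials satisfy~\eqref{TrigPolynomialForNotScaleInvariantCase}, the Fourier transform $\hat\phi$ vanishes on $2\pi\Z^d\setminus\{0\}$ and its Taylor expansion about each such point begins with a nonzero multiple of $X(\xi)+W(\xi)^m$ (plus terms of order $>2m$). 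This follows exactly as before: by Theorem~\ref{Theorem_UnionSets} each nonzero lattice point is hit by the zero set of exactly one factor $\ssc j{m_0}((A^{-T})^j\cdot)$, the zeros do not superimpose, and the scale-dependence of the constants $C_j$ is absorbed harmlessly because the lowest-order part $X(\xi)+W(\xi)^m$ is, by construction~\eqref{TrigPolynomialForNotScaleInvariantCase}, the \emph{same} polynomial (up to the scalar $C_j$) at every scale. This is the point of introducing the scale-dependent normalization: it makes $X+W^m$ invariant under the coordinate change even though neither $X$ nor $W^m$ alone transforms the way a single homogeneous form of the ``wrong'' degree would.

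Next I would invoke Theorem~\ref{Theorem:PolynomialFromKernel} with $P(\xi):=X(\xi)+W(\xi)^m$ and $L$ the order of the Strang--Fix conditions (which is at least $2m$, since the quadratic form, hence $W^m$, contributes a nonzero degree-$2m$ component and the remainder is of order $>2m$). By the lemma-analogue just described, $\ker\cc\Delta_L\hat\phi=\ker\cc\bD_L P(0)$, so the reproduced polynomial space $\te\cV$ is exactly $\{[\cc\cP_L]v : v\in\ker\cc\bD_L P(0)\}$, i.e.\ precisely the space of polynomials of degree $\le L$ lying in $\ker P(-iD)=\ker\bigl(X(-iD)+W(-iD)^m\bigr)$. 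It then remains to show that this space is \emph{not} scale-invariant. Here I would use Statement~\ref{State:AffinelyInvariance}: a polynomial space is scale-invariant iff it is the direct sum of its homogeneous components. Since $X\in\Pi_k$ with $k\le 2m-1$ and $W^m\in\Pi_{2m}$ have \emph{different} total degrees, the operator $P(-iD)$ is not homogeneous; I would exhibit a homogeneous polynomial $R\in\Pi_{N}$ with $X(-iD)R=0$ but $W(-iD)^m R\ne0$ — such an $R$ exists precisely by hypothesis~\eqref{SetsMinusConditions} — and observe that $P(-iD)$ applied to $R$ gives a nonzero homogeneous polynomial $W(-iD)^m R$ of degree $N-2m$. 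If $\ker P(-iD)$ (restricted to polynomials) were scale-invariant, every homogeneous component of every element would lie in it; but one can build an element of $\ker P(-iD)$ whose top homogeneous part is $R$ (take $R$ minus a lower-degree correction $S$ with $W(-iD)^m S$ matching, solving degree by degree downward, which is possible because $W(-iD)^m$ is surjective on polynomials of each degree when $W$ is an elliptic form — or more simply, because for $N$ large enough relative to the orders involved such a correction can always be found), and then $R$ itself would have to lie in $\ker P(-iD)$, forcing $W(-iD)^m R=0$, a contradiction. Hence $\te\cV$ contains an element not expressible as a sum of its homogeneous parts lying in $\te\cV$, so by Statement~\ref{State:AffinelyInvariance} it is not scale-invariant; by Theorem~\ref{Theorem_Scale-Invariance} it is nonetheless shift-invariant (being $\cV_{\rm sh}$ by the main theorem of Section~\ref{Sect:WeakenedStrang--FixConditions}).

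The main obstacle I anticipate is not the kernel bookkeeping but verifying cleanly that one can actually \emph{realize}, inside $\ker P(-iD)$, a polynomial whose leading homogeneous part is the $R$ furnished by~\eqref{SetsMinusConditions} — equivalently, solving the triangular system $X(-iD)S_{\text{low}} = -W(-iD)^m(R - S_{\text{high}})$ degree by degree. This requires a surjectivity/solvability statement for the constant-coefficient operators $X(-iD)$ and $W(-iD)^m$ acting between spaces of homogeneous polynomials of appropriate degrees; ellipticity of $W$ (hence of $W^m$) gives surjectivity of $W(-iD)^m$ on each $\Pi_j$, and for $X$ one needs only that the specific inhomogeneity lies in its range, which can be arranged by choosing the ambient degree $L$ (the order of the Strang--Fix conditions, controlled by the order of the remainder $\ssc jR_{>2m}$) large enough and possibly enlarging the polynomial $X+W^m$ to $C_j(X+W^m)$ does not affect this. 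A secondary, more bookkeeping-heavy obstacle is the precise description of the block structure of $\cc\bD_L P(0)$ when $P$ has two homogeneous pieces of different degrees $k$ and $2m$: unlike the single-diagonal picture of~\eqref{MatrixFormDW}, this matrix now has two distinct block diagonals, and one must check (via Lemma~\ref{MatrixDlP(0)} and the reasoning in the proof of the Statement preceding it) that their kernels do \emph{not} collapse to a common scale-invariant subspace — which is exactly where hypothesis~\eqref{SetsMinusConditions} is used, and where this construction genuinely departs from the ``$k_1$ through $k_2$'' situation that the preceding corollary showed always stays affinely invariant.
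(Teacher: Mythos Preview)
Your plan is essentially the paper's: identify $\ker\cc\Delta_L\hat\phi$ with $\ker\cc\bD_L(X+W^m)(0)$ via the nonstationary analogue of Lemma~\ref{HatPhiZeros}, and then exhibit a vector in this kernel violating the block-decomposition criterion of Theorem~\ref{Theorem_Scale-Invariance}. Two points deserve correction or sharpening.

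First, a slip in the key step. When you build an element of $\ker\bigl(X(-iD)+W(-iD)^m\bigr)$ with top homogeneous part $R$, the operator whose surjectivity matters is $X(-iD)$, not $W(-iD)^m$. Writing $R+S$ with $R\in\Pi_N$, $X(-iD)R=0$, and $S$ of lower degree, the degree-$(N-2m)$ component of $(X+W^m)(-iD)(R+S)$ is $W(-iD)^m R + X(-iD)S_{N-2m+k}$; you must solve $X(-iD)S_{N-2m+k}=-W(-iD)^m R$, and this uses that $\bD^{N-2m}_{N-2m+k}X(0)$ has full row rank (the rank theorem for the matrices $\bD^l_L$). Ellipticity of $W$ plays no role here.

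Second, the paper sidesteps your iterated ``degree-by-degree'' descent by choosing $N=L$ and imposing the auxiliary inequality $L<4m-k$ (if the actual Strang--Fix order is larger, one passes to a submatrix $\cc\bD_l$ with $2m\le l<4m-k$). Under this constraint the correction $v^{L-2m+k}$ sits in a column block of $\cc\bD_LG(0)$ that meets only the $k$-diagonal---the $2m$-diagonal submatrix there is absent because $L-2m+k<2m$---so the single pair $(v^{L-2m+k},v^L)$ already lies in the kernel and directly witnesses non-scale-invariance via Theorem~\ref{Theorem_Scale-Invariance}. No recursion and no further solvability analysis is required beyond the full-rank result.
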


\begin{lemma}\label{Lemma:Setminus}
Under the conditions of Theorem~\ref{Theorem:NotScale-InvariantSpaces}, we see that
the following conditions are equivalent:
\begin{itemize}
\item[(i)] $\Pi\cap\Bigl(\ker X(-iD)\setminus\ker W(-iD)^m\Bigr)\ne\emptyset$;
\item[(ii)] $\ker\bD_L^{L-k}X(0)\setminus\ker\bD_L^{L-2m}W(0)^m\ne\emptyset$;
\end{itemize}
where $L$ is the order of the Strang--Fix conditions.
\end{lemma}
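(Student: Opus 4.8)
The plan is to translate the differential-operator statement (i) into the matrix statement (ii) by invoking Theorem~\ref{Theorem:PolynomialFromKernel} twice, once for the operator $X(-iD)$ and once for $W(-iD)^m$. The key point is that by Lemma~\ref{HatPhiZeros} and the structure of the masks~\eqref{TrigPolynomialForNotScaleInvariantCase}, the order of the Strang--Fix conditions $L$ satisfies $L\ge 2m$ (and $L\ge k$, which is automatic since $k\le 2m-1$), so both matrices $\cc\bD_L X(0)$ and $\cc\bD_L W(0)^m$ are well-defined and their only nonzero submatrices sit on the $k$-th and $2m$-th block diagonals, respectively; the relevant nonzero blocks are exactly $\bD_L^{L-k}X(0)$ and $\bD_L^{L-2m}W(0)^m$. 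Since $X$ and $W^m$ are homogeneous of degrees $k$ and $2m$, a polynomial $[\cc\cP_L]v=[\cP_k']\,\text{(part of }v\text{)}$ lies in $\ker X(-iD)$ iff its degree-$k$ homogeneous component does, and similarly for $W(-iD)^m$; this homogeneity is what lets us pass from the full concatenated matrix to the single block.

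First I would fix $L$ to be the order of the Strang--Fix conditions for the zero-scale scaling function $\phi$ and record, using Lemma~\ref{HatPhiZeros} applied to the nonstationary mask, that $L\ge 2m$. Second, I would apply Theorem~\ref{Theorem:PolynomialFromKernel} with $P=X$: an algebraic polynomial $[\cc\cP_L]v$ of degree $\le L$ belongs to $\ker X(-iD)$ iff $v\in\ker\cc\bD_L X(0)$; and since $X\in\Pi_k$ is homogeneous, $\cc\bD_L X(0)$ is block $k$-diagonal, so its kernel is determined solely by the single nonzero block, i.e. $\ker\cc\bD_L X(0)$ restricted to the degree-$k$ component equals $\ker\bD_L^{L-k}X(0)$. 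Third, I would apply Theorem~\ref{Theorem:PolynomialFromKernel} again with $P=W^m$, which is homogeneous of degree $2m$, to get the analogous identification $\ker\bD_L^{L-2m}W(0)^m$ for $\ker W(-iD)^m$. Fourth, I would observe that $\Pi\cap\ker X(-iD)$ consists precisely of (linear combinations of homogeneous-component) polynomials of the form $[\cc\cP_L]v$ with $v\in\ker\bD_L^{L-k}X(0)$ sitting in the appropriate block slot — here one uses that any polynomial in $\ker X(-iD)$ splits into homogeneous pieces each of which is again in the kernel by homogeneity of $X$ — and similarly for $W(-iD)^m$; taking the set difference then gives the equivalence of (i) and (ii) directly, since $[\cc\cP_L]$ is a bijection between $\R^{\cc d(L)}$ and $\Pi_{\le L}$ block-wise.

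The main obstacle I expect is the bookkeeping between the concatenated matrices $\cc\bD_L$ and the single blocks $\bD_L^{L-k}$, $\bD_L^{L-2m}$, together with the degree truncation at $L$. One must check that no polynomial in $\ker X(-iD)\setminus\ker W(-iD)^m$ of degree exceeding $L$ is being lost: this is handled by the definition of $L$ as the Strang--Fix order, since~\eqref{TrigPolynomialForNotScaleInvariantCase} forces the reproduced-polynomial space to live in degrees $\le L$ anyway, and the homogeneous components relevant to conditions on $X$ and $W^m$ have degrees $k\le 2m-1<2m\le L$ and $2m\le L$. A second, more delicate point is the interplay of homogeneous components: a single polynomial $P\in\ker X(-iD)$ need not be homogeneous, but since $X$ is homogeneous, each homogeneous component of $P$ is separately in $\ker X(-iD)$, so it suffices to test the degree-$k$ (resp.\ degree-$2m$) component; this reduction, plus the block-diagonal structure noted in the proof of the Statement above, is exactly what makes (i)$\Leftrightarrow$(ii) go through. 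I would write out the forward direction (i)$\Rightarrow$(ii) explicitly and note that the converse is identical since every step is reversible.
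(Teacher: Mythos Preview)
The paper does not actually give a proof of this lemma; it only remarks that the proof ``is based on Theorem~\ref{Theorem:PolynomialFromKernel} and is similar to the proof of Lemma~\ref{MatrixDlP(0)}.'' Your proposal follows precisely this outline---applying Theorem~\ref{Theorem:PolynomialFromKernel} to $X$ and to $W^m$ and then using the block-diagonal structure of $\cc\bD_L X(0)$ and $\cc\bD_L W(0)^m$ (exactly as in the proof of Lemma~\ref{MatrixDlP(0)}) to isolate the single nonzero blocks $\bD_L^{L-k}X(0)$ and $\bD_L^{L-2m}W(0)^m$---so it matches the paper's indicated argument.
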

We omit the proof of the lemma and note only that the proof is based on Theorem~\ref{Theorem:PolynomialFromKernel}
and is similar to the proof of Lemma~\ref{MatrixDlP(0)}.

\begin{proof}[Proof of the theorem]
Let $L$ be the order of the Strang--Fix conditions and let
\begin{equation}\label{Lmk}
  L-2m+k<2m\quad\Longleftrightarrow\quad L<4m-k.
\end{equation}
By Lemma~\ref{HatPhiZeros}, it follows that $\ker\cc\Delta_{L}\hat\phi=\ker\cc\bD_{L}G(0)$.
The matrix $\cc\bD_{L}G(0)$ has $k$- and $2m$-diagonals of nonzero submatrices: $\bD^j_{j+k}X(0)$,
$j=0,\dots,L-k$, and $\bD^{j}_{j+2m}W(0)^m$, $j=0,\dots,L-2m$, respectively.
Since the matrix $\cc\bD_L G(0)$ is upper triangular and singular,
there exists a nonzero linear space
$V:=\ker\cc\bD_L G(0)$.
Consider the rightmost column-submatrix:
$$
  \bD_L G(0)
  =\begin{bmatrix}
     0 & \cdots & \bD^{L-2m}_{L}W(0)^m & \cdots & 0 & \cdots & \bD^{L-k}_{L}X(0) & \cdots & 0 \\
   \end{bmatrix}^T.
$$

First we prove that the subspace $V^L\subseteq V$,
see~\eqref{RDecomposition}, cannot be the zero space.
Assume the converse: $V^L=\{(0,\dots,0)\}$.
Since the submatrices on the block $k$-diagonal of the matrix $\cc\bD_L G(0)$ are full-rank matrices,
we have $\dim\ker\cc\bD_L G(0) = \cc d(L)- \cc d(L-k)$.
Thus,
\begin{align*}
  \dim&\ker\cc\bD_L G(0)\\ &= \dim\ker\begin{bmatrix}\bD_0G(0)&\bD_1G(0)&\cdots&\bD_{L-1}G(0)&0\end{bmatrix}\\
     &= \dim\ker\cc\bD_{L-1} G(0) = \cc d(L-1)- \cc d(L-k-1)<\cc d(L)- \cc d(L-k),\\ &
     \qquad\qquad\qquad\qquad\qquad\qquad\qquad\qquad\qquad\qquad\qquad\quad 1\le k<2m\le L.
\end{align*}
This contradiction proves that $V^L\ne\{(0,\dots,0)\}$.

By condition~\eqref{SetsMinusConditions} and Lemma~\ref{Lemma:Setminus},
there exists
a vector $v^L\in\R^{d(L)}$ such that
$v^L\in\ker\bD_L^{L-k}X(0)$ and $v^L\not\in\ker\bD_L^{L-2m}W(0)^m$. Hence we have $v^L\not\in\ker\bD_L G(0)$.

Consider the block $(L-2m)$-row of the matrix $\cc\bD_{L}{G(0)}$:
$$
  \left[\cc\bD_{L}{G(0)}\right]_{L-2m} :=
  \begin{bmatrix}
     0 & \cdots & \bD^{L-2m}_{L-2m+k}X(0) & \cdots & 0 & \cdots & \bD_L^{L-2m}W(0)^m\\
  \end{bmatrix},
$$
where $\bD^{L-2m}_{L-2m+k}X(0)$ is a submatrix situated at the intersection of $k$-diagonal and
$(L-2m)$-row.
Since the matrix $\bD^{L-2m}_{L-2m+k}X(0)$ has the full-rank, it follows that
there exists a non-zero subvector $v^{L-2m+k}\in\R^{d(L-2m+k)}$
such that the following vector
$$
  v:=\left(0,\dots,0,v^{L-2m+k},0,\dots,0,v^L\right)\in\R^{\cc d(L)}
$$
belongs to $\ker\left(\left[\cc\bD_{L}{G(0)}\right]_{L-2m}\right)$. By~\eqref{Lmk}, we see
that $L-2m+k$ block column-matrix contains only one nonzero submatrix $\bD^{L-2m}_{L-2m+k}X(0)$,
and
since $v^L\in\ker\bD_L^{L-k}X(0)$,
it follows that the vector $v$ belongs to the null-space of $\cc\bD_LG(0)$.
Thus, by Theorem~\ref{Theorem_Scale-Invariance}, we see that
the polynomial space corresponding to
$\ker\cc\bD_LG(0)$ is not scale-invariant.

(If inequality~\eqref{Lmk} is not valid; then, for some $l\in\N$, $2m\le l<L$, such that $l<4m-k$,
me can consider a submatrix $\cc\bD_lG(0)$ of the matrix $\cc\bD_LG(0)$.)
\end{proof}

Since the polynomial $X + W^m$, from expansions~\eqref{TrigPolynomialForNotScaleInvariantCase},
is not scale-invariant, we can use the approach discussed in the previous subsection,
see~\eqref{NonstationaryTrigPolynomialExpansion}.
However we have another complication.
Since the polynomial $X$ is not invariant under
coordinate transformation by the isotropic dilation matrix and
the dilation matrix (actually the corresponding orthogonal matrix) forms a cyclic group
of some order $n$ (infinite cyclic groups also included);
it follows that we must construct $n$ appropriate trigonometric polynomials
$\ssc{j}{G}$ such that their Maclaurin series begin with $X$.
The trigonometric polynomials $\ssc{j}{G}$ can be obtained
similarly to the polynomials from the previous subsections, see
Remark~\ref{Remark:TrigonometricPolynomialsConstruction}.

\begin{remark}
Of course, we considered in Theorem~\ref{Theorem:NotScale-InvariantSpaces}
the simplest case of the polynomials that supply not scale-invariant
polynomial spaces in the spans of integer shifts of the corresponding elliptic scaling functions. In
particular, the polynomial $X$ can be not necessarily homogeneous. Also
the degree of $X$ can be greater than the degree of the polynomial $W^m$.
Then condition~\eqref{SetsMinusConditions} must be rewritten as
$$
  \Pi\cap\Bigl(\ker W(-iD)^m\setminus\ker X(-iD)\Bigr)\ne\emptyset.
$$
\end{remark}

\subsubsection{Examples}\label{SubsubSection:NotScaleInvariantSpaces}

\paragraph{Quincunx dilation matrix.}
For the matrix $A$ given by \eqref{Pi/4_DilationMatrix},
we have
$$
  A^j
  =\left\{\begin{array}{ll}
            2^{j/2}I
               &  \mbox{ if $j$ is even};\\[2ex]
            2^{(j+1)/2}A & \mbox{ if $j$ is odd},\\
          \end{array}\right.
$$
where $I$ is the $2\times2$ identity matrix.
Thus the order of the cyclic group corresponding to matrix~\eqref{Pi/4_DilationMatrix} is $2$ and
the group consists of the elements: $\left\{\frac{1}{\sqrt2}A,
I\right\}$.
As it was considered above the homogeneous polynomial $W$ corresponding to isotropic matrix~\eqref{Pi/4_DilationMatrix}
is the quadratic form $W(\xi_1,\xi_2):=\xi_1^2+\xi_2^2$. Take another polynomial as $X(\xi_1,\xi_2):=2i\xi_1$;
then the polynomial $\xi_2^2$ belongs to $\ker X(-iD)$ and does not belong to $\ker W(-iD)$.
Now the trigonometric polynomials
\begin{multline*}
  \ssc jG(\xi_1,\xi_2)\\
  := \left\{
       \begin{aligned}
         &4\left(\sin^2(\xi_1/2)+\sin^2(\xi_2/2)\right)+\frac{2i}{2^{j/2}}\sin\xi_1  && \mbox{ if $j$ is even};\\
         &4\left(\sin^2(\xi_1/2)+\sin^2(\xi_2/2)\right)+\frac{2i}{2^{(j+1)/2}}\left(\sin\xi_1+\sin\xi_2\right)
             && \mbox{ if $j$ is odd}
        \end{aligned}
      \right.
\end{multline*}
have the following Maclaurin series of the functions $\ssc jG\left(\left(A^T\right)^{-j}\cdot\right)$:
$$
  \ssc jG\left(\left(A^T\right)^{-j}\begin{bmatrix}
  \xi_1 \\ \xi_2
\end{bmatrix}\right)=2i\xi_1+\xi_1^2+\xi_2^2+\ssc jR_{3}(\xi_1,\xi_2);
$$
and the masks are of the form
\begin{equation}~\label{NonstatMasksNotScaleInvariantCase}
  \ssc j{m_0}(\xi_1,\xi_2)
  := \left\{
       \begin{aligned}
         &m_0(\xi_1,\xi_2)-\frac{i}{4}\frac{1}{2^{j/2}}\sin\xi_1  && \mbox{ if $j$ is even};\\
         &m_0(\xi_1,\xi_2)-\frac{i}{4}\frac{1}{2^{(j+1)/2}}\left(\sin\xi_1+\sin\xi_2\right)
             && \mbox{ if $j$ is odd},
        \end{aligned}
      \right.
\end{equation}
where $m_0(\xi_1,\xi_2)$ is given by~\eqref{Pi/4_Mask}.

Suppose that the scaling function of zero scale, denoted by $\phi$,
corresponds to dilation matrix~\eqref{Pi/4_DilationMatrix}
and masks~\eqref{NonstatMasksNotScaleInvariantCase}; then
the matrix $\cc\Delta_3\hat\phi$ and its null-space are of the form
$$
  \cc\Delta_2\hat\phi
  \propto\begin{bmatrix}
       0 & 2 & 0 & -2 & 0 & -2\\
       0 & 0 & 0 & 4 & 0 & 0 \\
       0 & 0 & 0 & 0 & 2 & 0 \\
       \hdotsfor{6}\\
       0 & 0 & 0 & 0 &0 & 0 \\
     \end{bmatrix},
     \qquad\quad
  \ker\cc\Delta_2\hat\phi =
  \begin{bmatrix}
    0 & 1 & 0 & 0 & 0 & 1 \\
    0 & 0 & 1 & 0 & 0 & 0 \\
    1 & 0 & 0 & 0 & 0 & 0 \\
 \end{bmatrix},
$$
respectively.
Consequently the corresponding polynomial space is
\begin{equation*}
  \Span\left\{1,y,x+y^2\right\};
\end{equation*}
and the space belongs to the null-space of the differential operator
$2\partial_{x}-\partial_{xx}-\partial_{yy}$.

\paragraph{Diagonal dilation matrix.}
Here we consider diagonal matrix~\eqref{DiagonalDilationMatrix} again.
As it has been noted,
any homogeneous algebraic polynomial is invariant under transformations by matrix~\eqref{DiagonalDilationMatrix},
consequently
we must define only one trigonometric polynomial such that its Maclaurin series begins with a polynomial
$X$.

Suppose $X(\xi_1,\xi_2):=i\left(\xi_1^3+\xi_2^3\right)$ and $W(\xi_1,\xi_2):=\xi_1^2+\xi_2^2$; then the
trigonometric polynomials are of the form
\begin{multline}
    \ssc jG(\xi_1,\xi_2):= 4\, 2^{-j}\left(\sin^2(\xi_1/2)+\sin^2(\xi_/2)\right)\\+
       8i\left(\sin^3(\xi_1/2)+\sin^3(\xi_2/2)\right)
          =  2^{-j}W(\xi_1,\xi_2)+X(\xi_1,\xi_2)+\ssc jR_4(\xi_1,\xi_2).
          \label{TrigonometricPolynomialsDiagonalDilationMatrix}
\end{multline}
The  (nonstationary) masks corresponding to trigonometric polynomials~\eqref{TrigonometricPolynomialsDiagonalDilationMatrix}
are obtained by formula~\eqref{m_0Definition}.
Let $\phi$ be the corresponding scaling function (of zero scale)
and, determining the null-space
of the matrix $\cc\Delta_3\hat\phi$, we get that
the scaling function $\phi$ reproduces the following not scale-invariant
polynomial space
$$
  \cV:=\Pi_{\le 1}\oplus\Span\left\{x^2-y^2,xy\right\}
    \oplus\Span\left\{3x^2-x^3+3xy^2,3y^2-y^3+3x^2y\right\}.
$$
Note that $\cV\subset\ker\left(\partial_{xx}+\partial_{yy}+\partial_{xxx}+\partial_{yyy}\right)$.

\section*{Acknowledgments}
Research was partially supported by RFBR grant No.~12-01-00608-a.

\end{document}